\newtheorem{thm}{Theorem}
\newtheorem{lem}{Lemma}
\newtheorem{definition}{Definition}
\newtheorem{coroll}{Corollary}
\newtheorem{remark}{Remark}
\newtheorem{assumption}{Assumption}
\begin{document}

\title[On sampling Kaczmarz-Motzkin methods for solving large-scale nonlinear systems]{On sampling Kaczmarz-Motzkin methods for solving large-scale nonlinear systems}

%%=============================================================%%
%% Prefix	-> \pfx{Dr}
%% GivenName	-> \fnm{Joergen W.}
%% Particle	-> \spfx{van der} -> surname prefix
%% FamilyName	-> \sur{Ploeg}
%% Suffix	-> \sfx{IV}
%% NatureName	-> \tanm{Poet Laureate} -> Title after name
%% Degrees	-> \dgr{MSc, PhD}
%% \author*[1,2]{\pfx{Dr} \fnm{Joergen W.} \spfx{van der} \sur{Ploeg} \sfx{IV} \tanm{Poet Laureate} 
%%                 \dgr{MSc, PhD}}\email{iauthor@gmail.com}
%%=============================================================%%

\author{\fnm{Feiyu} \sur{Zhang}}\email{s20090016@s.upc.edu.cn}

\author*{\fnm{Wendi} \sur{Bao}*}\email{baowendi@sina.com}

\author{\fnm{Weiguo} \sur{Li}}\email{liwg@upc.edu.cn}
\equalcont{These authors contributed equally to this work.}

\author{\fnm{Qin} \sur{Wang}}\email{s20090002@s.upc.edu.cn}
\equalcont{These authors contributed equally to this work.}

\affil{\orgdiv{College of Science}, \orgname{China University of Petroleum}, \city{Qingdao}, \postcode{266580}, \country{P.R.China}}

%\affil[1]{\orgdiv{College of Science}, \orgname{China University of Petroleum}, \city{Qingdao}, \postcode{266580}, \country{P.R.China}}

%\affil*[2]{\orgdiv{College of Science}, \orgname{China University of Petroleum}, \city{Qingdao}, \postcode{266580}, \country{P.R.China}}

%\affil[3]{\orgdiv{College of Science}, \orgname{China University of Petroleum}, \city{Qingdao}, \postcode{266580}, \country{P.R.China}}

%\affil[4]{\orgdiv{College of Science}, \orgname{China University of Petroleum}, \city{Qingdao}, \postcode{266580}, \country{P.R.China}}
%%==================================%%
%% sample for unstructured abstract %%
%%==================================%%

\abstract{In this paper, for solving large-scale nonlinear equations we first propose a nonlinear sampling Kaczmarz-Motzkin (NSKM) method. Based on the local tangential cone condition and the Jensen's inequality, we prove convergence of our method with two different assumptions. Then, for solving nonlinear equations with the convex constraints we propose two variants of the NSKM method: the projected sampling Kaczmarz-Motzkin (PSKM) method and the accelerated projected sampling Kaczmarz-Motzkin (APSKM) method. With the use of the nonexpansive property of the projection and the convergence of the NSKM method, the convergence analysis is obtained. Numerical results show that the NSKM method with the sample of the suitable size outperforms the nonlinear randomized Kaczmarz (NRK) method in terms of calculation times. The APSKM and PSKM methods are practical and promising for the constrained nonlinear problem. }

\keywords{Large-scale nonlinear equations, Finite convex constraints, Sampling Kaczmarz-Motzkin method, Projection method, Randomized accelerated projection method}

%%\pacs[JEL Classification]{D8, H51}

%%\pacs[MSC Classification]{35A01, 65L10, 65L12, 65L20, 65L70}

\maketitle

\section{Introduction}\label{intro}
Consider the nonlinear equations with finite convex constraints
\begin{equation}\label{equ1}
	f(x)=0\quad \text{subject to} \quad x\in C,
\end{equation}
where $f:\mathscr{D}(f)\subseteq\mathbb{R}^n \to \mathbb{R}^m$ is a nolinear vector-valued function, $x\in \mathbb{R}^n$ is an unknown vector and $C$ is a nonempty intersection of finite nonempty closed convex sets $C_i,$ i.e. $C=\bigcap \limits_{i=1}^{k_c} C_i$ ($k_c$ is potentially a large number). If $x^*\in \mathbb{R}^n$ exists such that $f(x^*)=0$ and $x^*\in C$, then $x^*$ is a solution of \eqref{equ1}. Such problems arise from many practical applications, e.g., electrical impedance tomography, circuit problems and chemical equilibrium systems.

When $C=\mathbb{R}^n$, the system \eqref{equ1} is a unconstrained problem. This problem has attracted widespread attention. Many computational methods have been proposed, for example, Newton method \cite{kelley1995iterative}, Quasi-Newton method \cite{dennis1977quasi}, Gauss-Newton method \cite{li1999globally} and Levenberg-Marquardt method \cite{yamashita2001rate}. These methods require the information of the whole nonlinear system, which may needs a large amount of computation for solving a large-scale nonlinear system. Stochastic gradient descent (SGD) method \cite{jin2020convergence} requires only evaluating one randomly selected nonlinear equation at each iteration, instead of the whole nonlinear system, which substantially reduces the computational cost per iteration and enables excellent to deal with the large-scale problems. Recently, Wang and Li et al. \cite{wang2022nonlinear} proposed a class of randomized Kaczmarz algorithms for solving large-scale nonlinear equations with specific assumptions, which only needs to calculate one row of the Jacobian matrix instead of the entire Jacobian matrix at each iteration and reduced the amount of calculation and storage. In addition, numerical results showed that algorithms proposed in \cite{wang2022nonlinear} are superior to the SGD algorithm. However, in the nonlinear randomized Kaczmarz (NRK) method \cite{wang2022nonlinear}, to determine the row index of the Jacobian matrix, all entries of $f(x)$ need to be calculated at each step, which is expensive and inefficient when the size of $f(x)$ is very large. In order to overcome the problem, Needell et al. \cite{de2017sampling} presented the sampling Kaczmarz-Motzkin (SKM) method for solving large-scale systems of linear inequalities, which only needs to compute a portion of the residuals at each step.

In general, the system \eqref{equ1} is a large-scale nonlinear problems with a large number of convex constraints. Based the SGD method, Wang \cite{wang2013incremental} studied the stochastic gradient descent method with a single random projection (PSGD). They randomly picked one out of all constraint sets and found the projection onto it after using stochastic gradient descent at each iteration. Meanwhile,  using a linear combination of several projections, Qin and Etesami \cite{qin2020randomized} devised a randomized accelerated projection algorithm, which has the faster convergence rate than the classic cyclic projection method.

In this paper, motivated by \cite{de2017sampling}, we first present a nonlinear sampling Kaczmarz-Motzkin (NSKM) method for a unconstrained problem \eqref{equ1} and establish the corresponding convergence theory with two different assumptions. Preliminary numerical results show that the NSKM method is more effective to solve the large-scale nonlinear equations than the NRK method in terms of calculation times. Then, inspired by the ideas in \cite{wang2013incremental}, \cite{qin2020randomized} and the NSKM method, for the system \eqref{equ1} with convex constraints we propose the projected sampling Kaczmarz-Motzkin (PSKM) method and the accelerated projected sampling Kaczmarz-Motzkin (APSKM) method. Applying the nonexpansive property of the projection and the local tangential cone condition of the system, we obtain the convergence analyses of the two new methods. The numerical results confirm the PSKM and APSKM methods have advantages over the PSGD method in terms of the number of iteration steps and calculation times .

The remaining part of the paper is organized as follows. In Section \ref{section2}, we present the NSKM method and analyzed its convergence. In Section \ref{section3}, we extend the NSKM method to get two variants of the NSKM method and prove the convergences of these methods. In Section \ref{section4}, we provide some numerical experiments to display the practical performence of the proposed methods. Finally, we finish this paper with a conclusion.

Throughout the paper, we use $\lvert\cdot\rvert$ to denote the scalar absolute value and  $\|\cdot\|$ to denote the vector 2-norm. The set of natural numbers is defined as $\mathbb{N}$. For a matrix $A\in\mathbb{R}^{m\times n}$, we use $\|A\|_F$, $\sigma_{min}(A)$ and $A(i,:)$ to denote the matrix Frobenius norm, the smallest non-zero singular value of matrix $A$ and the $i$th row of the matrix $A$, respectively. Let $P_C$ represent the metric projection onto the set $C$. We indicate by $E_{k-1}[\cdot]$ the expected value conditional on the first $k-1$th iterations, and from the law of the iterated expectation we have $E[E_{k-1}[\cdot]]=E[\cdot]$.
\section{The nonlinear sampling Kaczmarz-Motzkin method }
\label{section2}
In this section, we first present the NSKM method to solve the nonlinear equations
\begin{equation}\label{equation2}
	f(x)=0,
\end{equation} 
where $f:\mathscr{D}(f)\subseteq\mathbb{R}^n \to \mathbb{R}^m$ is a nolinear vector-valued function, $x\in \mathbb{R}^n$ is an unknown vector. The system \eqref{equation2} can also be written in the following form: 
\begin{equation*}
	f_i(x)=0, i=1,2,...,m,
\end{equation*}
where at least one $f_i:\mathscr{D}(f_i)\subseteq \mathbb{R}^n\to \mathbb{R}(i=1,2,...,m)$ are nonlinear operators. Then, the convergence analysis of the NSKM method is followed.
\subsection{The NSKM method}
In each iteration of the NRK method, we note that the NRK method needs to calculate $f(x)$, which is very expensive and inefficient when the size of nonlinear equations is large. Thus, we eager to design a more cheap algorithm at each update to avoid computing all $f(x)$. Motivated by the SKM method in \cite{de2017sampling}, we propose the NSKM method, in which, a sample of $\beta$ constraints is randomly selected from all entrys of $f(x)$ according to uniform probability and a portion of $f(x)$ need to be calculated instead of all $f(x)$. Moreover, in the NSKM method, we choose indicator $i_k$ by the maximal-residual criterion from the selected sample. The NSKM method can be formulated as follows.

\begin{algorithm}
	\caption{The nonlinear sampling Kaczmarz-Motzkin (NSKM) method }\label{alg2.1}
	\begin{algorithmic}[1]
		\Require $f(x)$, $\beta$, $x_0$, $k=1$, and maximum iteration steps $T$
		\Ensure $x_k$
		\While{iteration termination criterion does not hold and $k\leq T$ }
		\State Choose a sample of $\beta$ constraints, $\tau_k$, uniformly at random from all entrys of $f(x)$
		\State Compute residual of the selected sample $r_{\tau_k}=-f_{\tau_k}(x_{k-1})$
		\State  Set $i_{k}=\mathop{argmax}\limits_{i\in\tau_k}{\mid r_{i}\mid}$
		\State Compute gradient $g_{i_k}=\nabla f_{i_k}(x_{k-1})$
		\State Set $x_{k}=x_{k-1}+\frac{r_{i_k}}{\|g_{i_k}\|^2}g_{i_k}^T$
		\State $k=k+1$	 
		\EndWhile	
	\end{algorithmic}
\end{algorithm}
\subsection{Convergence analysis of the NSKM method}
In order to prove the convergence of Algorithm \ref{alg2.1}, we need to prepare some definitions, lemmas and corollaries at the top of this section.
\begin{definition}[\cite{wang2022nonlinear}]
	\label{definition1}
	A matrix $A\in \mathbb{R}^{m\times n}$ is called row bounded below, if there exists a positive number $\varepsilon$ such that $\|A(i,:)\|\geq \varepsilon$, for $1\leq i\leq m$.
\end{definition}
\begin{definition}[\cite{haltmeier2007kaczmarz}]
	\label{definition2}
	If there is a point $x_0\in\mathscr{D}(f)$ such that for every $i\in \{1,2,...,m\}$ and $\forall x_1,x_2\in \mathscr{B}_\rho (x_0)\subset \mathscr{D}(f)$ $ (\mathscr{B}_\rho(x_0)=\{x\lvert \|x-x_0\|\leq \rho\})$, there existis $\eta_i\in [0,\eta)$ $(\eta=\mathop{max}\limits_{i}{\eta_i}\textless \frac{1}{2})$ such that 
	\begin{equation}\label{def2.2.1}
		\lvert f_i(x_1)-f_i(x_2)-\nabla f_i(x_1)(x_1-x_2)\rvert \leq \eta_i\lvert f_i(x_1)-f_i(x_2)\rvert ,
	\end{equation}
	then the function $f:\mathscr{D}(f)\subset \mathbb{R}^n \rightarrow \mathbb{R}^m$ is referred to satisfy the local tagential cone condition in a ball $\mathscr{B}_\rho (x_0)$ of radius $\rho$ around $x_0$.
\end{definition}
\begin{lem}[\cite{wang2022nonlinear}]\label{lem2.1}
	Let $f(x)$ satisfy the local tangential cone condition in a ball $\mathscr{B}_\rho (x_0)$. Then, for $\forall x_1, x_2\in \mathscr{B}_\rho(x_0)\subset \mathscr{D}(f)$, we have 
	$$\mid f_i(x_1)-f_i(x_2)\mid \geq \frac{1}{1+\eta_i}\mid \nabla f_i(x_1)(x_1-x_2)\mid,\quad i\in \{1,2,...,m\}.$$
\end{lem}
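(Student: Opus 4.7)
The plan is to derive the inequality directly from the tangential cone condition \eqref{def2.2.1} using the reverse triangle inequality. The tangential cone condition bounds the discrepancy between the secant $f_i(x_1)-f_i(x_2)$ and the linearization $\nabla f_i(x_1)(x_1-x_2)$, so a one-line manipulation should suffice.

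First I would write
\[
\bigl|\nabla f_i(x_1)(x_1-x_2)\bigr| = \bigl|f_i(x_1)-f_i(x_2) - \bigl(f_i(x_1)-f_i(x_2) - \nabla f_i(x_1)(x_1-x_2)\bigr)\bigr|,
\]
and then apply the ordinary triangle inequality to split the right-hand side into $|f_i(x_1)-f_i(x_2)|$ plus $|f_i(x_1)-f_i(x_2) - \nabla f_i(x_1)(x_1-x_2)|$. Next I would invoke \eqref{def2.2.1} to bound the second term by $\eta_i|f_i(x_1)-f_i(x_2)|$, yielding
\[
\bigl|\nabla f_i(x_1)(x_1-x_2)\bigr| \leq (1+\eta_i)\,\bigl|f_i(x_1)-f_i(x_2)\bigr|,
\]
which is precisely the desired estimate after dividing by $1+\eta_i > 0$.

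There is essentially no obstacle here: the inequality $\eta_i \in [0,\eta)$ with $\eta < 1/2$ guarantees $1+\eta_i > 0$, so the division is legitimate, and both $x_1, x_2 \in \mathscr{B}_\rho(x_0)$ lets us apply the tangential cone condition to the pair. The proof is a direct consequence of Definition \ref{definition2} together with the triangle inequality, and no additional assumptions on $f$ beyond those already stated are needed.
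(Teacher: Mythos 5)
Your proof is correct: rewriting $\nabla f_i(x_1)(x_1-x_2)$ as the difference between the secant and the linearization error, applying the triangle inequality, and invoking \eqref{def2.2.1} gives $\lvert\nabla f_i(x_1)(x_1-x_2)\rvert\leq(1+\eta_i)\lvert f_i(x_1)-f_i(x_2)\rvert$ exactly as claimed. The paper itself cites this lemma from the literature without reproducing a proof, but your argument is the standard one and there is nothing to add.
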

By Lemma \ref{lem2.1}, the following corollary is naturally followed.
\begin{coroll}\label{coroll2.1}
	Let $f(x)$ satisfy the local tangential cone condition in a ball $\mathscr{B}_\rho (x_0)$. Then, for $\forall x_1, x_2\in \mathscr{B}_\rho(x_0)\subset \mathscr{D}(f)$, we have 
	$$\|f(x_1)-f(x_2)\|^2\geq \frac{1}{(1+\eta)^2}\| f'(x_1)(x_1-x_2)\|^2,$$
	where $\eta=\mathop{max}\limits_{i} {\eta_i} \textless \frac{1}{2}\quad(i=1,2,...,m).$
\end{coroll}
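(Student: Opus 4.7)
The plan is to derive the corollary directly from Lemma \ref{lem2.1} by squaring the componentwise inequality and summing over all coordinates. First I would invoke Lemma \ref{lem2.1} for each index $i \in \{1,2,\dots,m\}$, which gives
\[
|f_i(x_1)-f_i(x_2)|^2 \geq \frac{1}{(1+\eta_i)^2}|\nabla f_i(x_1)(x_1-x_2)|^2.
\]
Since $\eta_i \leq \eta$ for every $i$, the monotonicity of $t \mapsto 1/(1+t)^2$ on $[0,\infty)$ gives $1/(1+\eta_i)^2 \geq 1/(1+\eta)^2$. Substituting this uniform lower bound yields a single constant that can be pulled out of the eventual sum.

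Next I would sum the resulting inequalities over $i=1,\dots,m$. The left-hand side becomes $\sum_{i=1}^m |f_i(x_1)-f_i(x_2)|^2 = \|f(x_1)-f(x_2)\|^2$ by the definition of the Euclidean norm. On the right, since the $i$th entry of the vector $f'(x_1)(x_1-x_2)$ is exactly $\nabla f_i(x_1)(x_1-x_2)$, the sum $\sum_{i=1}^m |\nabla f_i(x_1)(x_1-x_2)|^2$ equals $\|f'(x_1)(x_1-x_2)\|^2$. Combining these identifications gives the claimed inequality.

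I do not expect any real obstacle here, as the statement is a routine aggregation of the scalar bound in Lemma \ref{lem2.1}. The only minor care needed is in the direction of the inequality when replacing $\eta_i$ by $\eta$: because $\eta$ is the maximum, $(1+\eta_i)^2 \leq (1+\eta)^2$, so the reciprocals satisfy $1/(1+\eta_i)^2 \geq 1/(1+\eta)^2$, which preserves the $\geq$ direction in the lower bound. No appeal to the bound $\eta<1/2$ is required for the corollary itself; that hypothesis simply ensures that $\eta$ is well-defined and strictly less than $1/2$ for later use.
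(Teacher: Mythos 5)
Your proposal is correct and follows essentially the same route as the paper: apply Lemma \ref{lem2.1} componentwise, square, replace each $1/(1+\eta_i)^2$ by the uniform lower bound $1/(1+\eta)^2$ using $\eta=\max_i\eta_i$, and sum over $i$ to assemble the two squared norms. The only difference is cosmetic (the paper expands the norm first and then bounds each term, whereas you bound each term first and then sum), so nothing further is needed.
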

\begin{proof}
	\begin{align*}
		\|f(x_1)-f(x_2)\|^2
		&=\|(f_1(x_1)-f_1(x_2), f_2(x_1)-f_2(x_2), ..., f_m(x_1)-f_m(x_2))^T\|^2\\
		&=(f_1(x_1)-f_1(x_2))^2+(f_2(x_1)-f_2(x_2))^2+...+(f_m(x_1)-f_m(x_2))^2\\
		&\geq \frac{1}{(1+\eta_1)^2}(\nabla f_1(x_1)(x_1-x_2))^2+\frac{1}{(1+\eta_2)^2}(\nabla f_2(x_1)(x_1-x_2))^2+...\\
		&+\frac{1}{(1+\eta_m)^2}(\nabla f_m(x_1)(x_1-x_2))^2\\
		&\geq \frac{1}{(1+\eta)^2}\sum_{i=1}^{m}(\nabla f_i(x_1)(x_1-x_2))^2\\
		&=\frac{1}{(1+\eta)^2}\|f'(x_1)(x_1-x_2)\|^2,
	\end{align*}
	where the first inequality is obtained by Lemma \ref{lem2.1}.
\end{proof}
\begin{lem}\label{lem2.2}
	Let $f(x)$ satisfy the local tangential cone condition in a ball $\mathscr{B}_\rho (x_0)$ with $x^*\in \mathscr{B}_{\frac{\rho}{2}}(x_0)$. Then the sequence $\{x_k\}_{k=0}^\infty$ generated by Algorithm \ref{assumption2.1} is contained in $\mathscr{B}_\rho(x_0)\subset\mathscr{D}(f)$. Furthermore, we have
	\begin{equation}\label{lem2.2.1}
		\|x_{k+1}-x^*\|^2\leq\|x_{k}-x^*\|^2 -(1-2\eta_{i_{k+1}} )\frac{f_{i_{k+1}}^2(x_{k})}{\|\nabla f_{i_{k+1}}(x_{k})\|^2},
	\end{equation}
	where $f(x^*)=0.$
\end{lem}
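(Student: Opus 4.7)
The plan is a joint induction on $k$ that simultaneously establishes (i) $x_k\in\mathscr{B}_\rho(x_0)$ and (ii) the recursion \eqref{lem2.2.1} with $k$ replaced by $k-1$. The base case $k=0$ is trivial, and in the inductive step I assume $x_0,\ldots,x_k$ all lie in $\mathscr{B}_\rho(x_0)$, so the tangential cone condition of Definition \ref{definition2} is applicable at $x_k$ with any other point of $\mathscr{B}_\rho(x_0)$, in particular $x^*$ (which lies in $\mathscr{B}_{\rho/2}(x_0)\subset\mathscr{B}_\rho(x_0)$).

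The one-step estimate comes from expanding the update rule of Algorithm \ref{alg2.1}. Writing $g=\nabla f_{i_{k+1}}(x_k)$ and using $x_{k+1}-x^*=(x_k-x^*)-\frac{f_{i_{k+1}}(x_k)}{\|g\|^2}g^{T}$, the squared norm produces a cross term proportional to $f_{i_{k+1}}(x_k)\,g(x_k-x^*)$ and a quadratic term $f_{i_{k+1}}^2(x_k)/\|g\|^2$. To bound the cross term from below, I apply Definition \ref{definition2} with $x_1=x_k$ and $x_2=x^*$ together with $f(x^*)=0$: this yields $|f_{i_{k+1}}(x_k)-g(x_k-x^*)|\le \eta_{i_{k+1}}|f_{i_{k+1}}(x_k)|$, from which the scalar inequality $f_{i_{k+1}}(x_k)\cdot g(x_k-x^*)\ge (1-\eta_{i_{k+1}})f_{i_{k+1}}^2(x_k)$ follows by the reverse triangle inequality. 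Plugging this back produces the claimed recursion \eqref{lem2.2.1} with factor $(1-2\eta_{i_{k+1}})$.

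To close the induction I must still verify $x_{k+1}\in\mathscr{B}_\rho(x_0)$; this is where the assumption $\eta<\tfrac{1}{2}$ is essential, because it makes $1-2\eta_{i_{k+1}}>0$ and hence the recursion gives $\|x_{k+1}-x^*\|\le\|x_k-x^*\|\le\cdots\le\|x_0-x^*\|\le\rho/2$. The triangle inequality then yields $\|x_{k+1}-x_0\|\le\|x_{k+1}-x^*\|+\|x^*-x_0\|\le\rho$, completing the induction. The main obstacle, and the only non-routine part of the argument, is the sign/absolute-value bookkeeping that turns the tangential cone inequality into the lower bound $(1-\eta_{i_{k+1}})f_{i_{k+1}}^2(x_k)$ on the cross term; once that is in place the rest of the proof is algebraic rearrangement plus a standard triangle-inequality containment argument.
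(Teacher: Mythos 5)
Your proposal is correct and follows essentially the same route as the paper: expand the squared norm of the update, use the local tangential cone condition at $(x_k,x^*)$ with $f(x^*)=0$ to control the cross term (your lower bound $f_{i_{k+1}}(x_k)\,\nabla f_{i_{k+1}}(x_k)(x_k-x^*)\ge(1-\eta_{i_{k+1}})f_{i_{k+1}}^2(x_k)$ is just the paper's add-and-subtract step rearranged), and close the induction via the monotone decrease of $\|x_k-x^*\|$ and the triangle inequality through $x^*\in\mathscr{B}_{\rho/2}(x_0)$. No gaps.
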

\begin{proof}
	
	\begin{align*}
	    &\quad\|x_{k+1}-x^*\|^2-\|x_{k}-x^*\|^2\\
		&=\|x_{k+1}-x_{k}\|^2+2\left<x_{k+1}-x_{k},x_{k}-x^*\right>\\
		&=\|-\frac{f_{i_{k+1}}(x_{k})}{\|\nabla f_{i_{k+1}}(x_{k})\|^2}\nabla f_{i_{k+1}}(x_{k})^T\|^2+2\langle-\frac{f_{i_{k+1}}(x_{k})}{\|\nabla f_{i_{k+1}}(x_{k})\|^2}\nabla f_{i_{k+1}}(x_{k})^T, x_{k}-x^*\rangle\\
		&=\frac{f_{i_{k+1}}^2(x_{k})}{\|\nabla f_{i_{k+1}}(x_{k})\|^2}-2\frac{f_{i_{k+1}}(x_{k})}{\|\nabla f_{i_{k+1}}(x_{k})\|^2}\nabla f_{i_{k+1}}(x_{k})(x_{k}-x^*)\\
		&=\frac{f_{i_{k+1}}^2(x_{k})}{\|\nabla f_{i_{k+1}}(x_{k})\|^2}-2\frac{f_{i_{k+1}}(x_{k})}{\|\nabla f_{i_{k+1}}(x_{k})\|^2}f_{i_{k+1}}(x_{k})\\
		&+2\frac{f_{i_{k+1}}(x_{k})}{\|\nabla f_{i_{k+1}}(x_{k})\|^2}(f_{i_{k+1}}(x_{k})-f_{i_{k+1}}(x^*)-\nabla f_{i_{k+1}}(x_{k})(x_{k}-x^*)).			
	\end{align*}	
	
	When $k=0$, $x_0\in \mathscr{B}_\rho(x_0)$ and 	$\lvert f_i(x_0)-f_i(x^*)-\nabla f_i(x_0)(x_0-x^*)\rvert \leq \eta_i\lvert f_i(x_0)-f_i(x^*)\rvert$ $(i=1,2,...,m) $, then we have
	\begin{align}\label{lemma2.2.2}
	%\begin{equation}\label{lemma2.2.2}
	%	\begin{split}
			&\quad\|x_{1}-x^*\|^2-\|x_{0}-x^*\|^2\nonumber\\
			&=\frac{f_{i_{1}}^2(x_{0})}{\|\nabla f_{i_{1}}(x_{0})\|^2}+2\frac{f_{i_{1}}(x_{0})}{\|\nabla f_{i_{1}}(x_{0})\|^2}(f_{i_{1}}(x_{0})-f_{i_{1}}(x^*)-\nabla f_{i_{1}}(x_{0})(x_{0}-x^*))\nonumber\\
			&-2\frac{f_{i_{1}}(x_{0})}{\|\nabla f_{i_{1}}(x_{0})\|^2}f_{i_{1}}(x_{0})\nonumber \\
			&\leq\frac{f_{i_{1}}^2(x_{0})}{\|\nabla f_{i_{1}}(x_{0})\|^2}+2\eta_{i_{1}}\frac{\mid f_{i_{1}}(x_{0})\mid}{\|\nabla f_{i_{1}}(x_{0})\|^2}\mid f_{i_{1}}(x_{0})\mid -2\frac{f_{i_{1}}^2(x_{0})}{\|\nabla f_{i_{1}}(x_{0})\|^2}\nonumber \\
			&=-(1-2\eta_{i_{1}})\frac{f_{i_{1}}^2(x_{0})}{\|\nabla f_{i_{1}}(x_{0})\|^2}.
	%	\end{split}		
	\end{align}
	
	Since $x^*\in \mathscr{B}_{\rho/2}(x_0)$ and \eqref{lemma2.2.2}, we have
	$$\|x_1-x_0\|=\|x_1-x^*+x^*-x_0\|\leq \|x_1-x^*\|+\|x^*-x_0\|\leq \rho. $$
	
	Thus, $x_1\in \mathscr{B}_\rho(x_0)$. 
	
	We assume that when $k\leq n$ $(n\in \mathbb{N})$, $x_k\in \mathscr{B}_\rho(x_0)$ and \eqref{lem2.2.1} holds, then, for $k=n+1$, similar to the derivation of $k=0$, we have $x_{n+1}\in \mathscr{B}_\rho(x_0)$ and \eqref{lem2.2.1} holds.
\end{proof}
\begin{lem} [\cite{brinkhuis2020convex}]
	A proper convex function is a function $f:\mathscr{D}(f)\to \mathbb{R}$, where $\mathscr{D}(f) \subset \mathbb{R}^n$ is a nonempty convex set. Then Jensen's inequality
	$$f((1-\alpha)x+\alpha y)\leq (1-\alpha)f(x)+\alpha f(y), \forall \alpha \in [0,1], \forall x,y \in \mathscr{D}(f)$$ holds.
\end{lem}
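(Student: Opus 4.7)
The plan is to observe that the stated inequality is essentially the defining property of a (proper) convex function, so the proof reduces to unpacking whichever equivalent characterization of convexity is taken as primitive. The cleanest route is via the epigraph $\mathrm{epi}(f) = \{(x,t) \in \mathscr{D}(f) \times \mathbb{R} : t \geq f(x)\}$, which for a proper convex function is, by definition (or by a standard equivalence), a nonempty convex subset of $\mathbb{R}^{n+1}$.

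Given any $x,y \in \mathscr{D}(f)$ and $\alpha \in [0,1]$, I would consider the two points $(x,f(x))$ and $(y,f(y))$, both of which lie in $\mathrm{epi}(f)$. Convexity of $\mathrm{epi}(f)$ then yields that the convex combination $\bigl((1-\alpha)x + \alpha y,\; (1-\alpha)f(x) + \alpha f(y)\bigr)$ also lies in $\mathrm{epi}(f)$. Reading off the definition of the epigraph, this gives precisely $f\bigl((1-\alpha)x + \alpha y\bigr) \leq (1-\alpha)f(x) + \alpha f(y)$, which is the claim. Convexity of $\mathscr{D}(f)$ guarantees that $(1-\alpha)x + \alpha y \in \mathscr{D}(f)$, so both sides are well defined.

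If instead one takes the stated inequality itself as the definition of a convex function (as in many texts, including the cited reference \cite{brinkhuis2020convex}), then there is literally nothing to prove. Either way, no genuine obstacle arises; the only point requiring any care is to verify that the notion of proper convex function used in \cite{brinkhuis2020convex} coincides with the one in force here, which for a finite-valued function with a nonempty convex domain is routine. The lemma is invoked in the sequel purely as a black-box bound, so no further elaboration is needed.
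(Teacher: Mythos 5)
Your proposal is correct and matches the paper's treatment: the paper states this lemma as a cited fact from \cite{brinkhuis2020convex} with no proof at all, precisely because the displayed inequality is the definition of convexity (the statement is really a definition dressed as a lemma). Your observation that it is definitional, together with the optional epigraph unpacking, is exactly the right level of response, and nothing further is needed.
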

The following Lemma \ref{lemm2.3.3} is very important in the next convergence analysis, so we review its proof in Appendix. 
\begin{lem} [\cite{brinkhuis2020convex}]\label{lemm2.3.3}
	Let an convex set $\mathscr{D}(f)\subset \mathbb{R}^n$ and a differentiable function $f:\mathscr{D}(f)\to \mathbb{R}$ be given. If f is convex, then $f(x)-f(y)\geq f'(y)(x-y)$ for all $x,y\in \mathscr{D}(f)$.
\end{lem}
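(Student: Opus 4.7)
The plan is to deduce this first-order characterization of convexity directly from Jensen's inequality, which was just recorded. Fix arbitrary $x, y \in \mathscr{D}(f)$ and, for $\alpha \in (0,1]$, apply Jensen's inequality to the convex combination $(1-\alpha)y + \alpha x = y + \alpha(x-y)$, yielding $f(y + \alpha(x-y)) \leq (1-\alpha)f(y) + \alpha f(x)$. The convexity of $\mathscr{D}(f)$ is what ensures that $y + \alpha(x-y)$ lies in the domain, so the left-hand side is well defined for every such $\alpha$.

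I would then subtract $f(y)$ from both sides and divide by $\alpha > 0$, obtaining the difference-quotient form
\[
\frac{f(y + \alpha(x-y)) - f(y)}{\alpha} \leq f(x) - f(y).
\]
Letting $\alpha \to 0^{+}$, the left-hand side is, by definition, the directional derivative of $f$ at $y$ in the direction $x - y$. Since $f$ is assumed differentiable at $y$, this directional derivative equals $f'(y)(x-y)$. The inequality is preserved under the limit, giving $f'(y)(x-y) \leq f(x) - f(y)$, which is the desired bound.

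There is no real obstacle: the whole argument is a one-line manipulation of Jensen's inequality followed by a single limit. The only point that deserves care is justifying that the difference quotient converges to $f'(y)(x-y)$, which is immediate from differentiability once one notes that the segment from $y$ to $x$ is contained in $\mathscr{D}(f)$ by convexity of the domain.
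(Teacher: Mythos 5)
Your argument is correct and matches the paper's own proof: both apply Jensen's inequality to the point $y+\alpha(x-y)$, rearrange into a difference quotient, and let $\alpha\to 0^{+}$, with the paper phrasing the final limit via a first-order Taylor expansion with an $o(\|\alpha(x-y)\|)$ remainder rather than via the directional derivative. The two are the same computation up to a relabeling of $x$ and $y$.
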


\begin{lem}[\cite{de2017sampling}]\label{lem2.3}
	Suppose $\{a_i\}_{i=1}^n$ and $\{b_i\}_{i=1}^n$ are real sequences and $a_{i+1}\textgreater a_i\textgreater 0$ and $b_{i+1}\geq b_i\geq0$. Then
	\begin{equation*}
		\sum_{i=1}^{n}a_ib_i\geq \sum_{i=1}^{n}\bar{a}b_i,
	\end{equation*} 
	where $\bar{a}$ is the average $\bar{a}=\frac{1}{n}\sum_{i=1}^{n}a_i.$
\end{lem}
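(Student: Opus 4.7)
The statement is a version of Chebyshev's sum inequality for similarly ordered sequences, so the plan is to exploit the fact that both $\{a_i\}$ and $\{b_i\}$ are non-decreasing and apply a symmetrization argument. Equivalently, since $\sum_{i=1}^n \bar{a} b_i = \bar{a}\sum_{i=1}^n b_i = \frac{1}{n}\bigl(\sum_{i=1}^n a_i\bigr)\bigl(\sum_{i=1}^n b_i\bigr)$, the inequality reduces to
\[
n\sum_{i=1}^n a_i b_i \;\geq\; \Bigl(\sum_{i=1}^n a_i\Bigr)\Bigl(\sum_{i=1}^n b_i\Bigr),
\]
which is the form I would work with directly.

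My preferred route is the double-sum identity. I would introduce the auxiliary quantity $\sum_{i=1}^n\sum_{j=1}^n (a_i-a_j)(b_i-b_j)$ and expand it term by term. Because $\{a_i\}$ is strictly increasing and $\{b_i\}$ is non-decreasing, the factors $a_i-a_j$ and $b_i-b_j$ carry the same sign (or one is zero) for every pair $(i,j)$, so the entire double sum is non-negative. Expanding the product gives $2n\sum_i a_i b_i - 2\bigl(\sum_i a_i\bigr)\bigl(\sum_i b_i\bigr)$, and dividing by $2n$ yields precisely the desired estimate.

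An equivalent option, which I would mention as a backup, is a summation-by-parts proof: set $c_i:=a_i-\bar a$, let $S_i:=\sum_{j=1}^i c_j$ and observe $S_n=0$. Since $\{a_j\}$ is increasing and $\bar a$ is its mean, the partial sums $S_i$ are non-positive for every $i<n$. Abel's summation then rewrites $\sum_i c_i b_i = -\sum_{i=1}^{n-1} S_i (b_{i+1}-b_i)$, and the monotonicity $b_{i+1}-b_i\geq 0$ makes each summand non-negative.

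There is no real obstacle here beyond the bookkeeping: the symmetrization argument is essentially one line once the non-negativity of the double sum is recognized. The only point that deserves care is verifying that the sign hypothesis on the sequences is used only to conclude $(a_i-a_j)(b_i-b_j)\geq 0$ (the strict inequality on $\{a_i\}$ and non-strict on $\{b_i\}$ are both compatible with this), and that the positivity $a_i>0$, $b_i\geq 0$ is not actually needed for the inequality itself but is consistent with the intended application in the convergence analysis.
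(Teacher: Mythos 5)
Your proof is correct. The paper itself does not prove this lemma --- it is quoted verbatim from the cited reference \cite{de2017sampling} and used as a black box in the proof of Theorem \ref{the2.1} --- so there is no in-paper argument to compare against. Your double-sum identity
\[
\sum_{i=1}^n\sum_{j=1}^n (a_i-a_j)(b_i-b_j) \;=\; 2n\sum_{i=1}^n a_i b_i \;-\; 2\Bigl(\sum_{i=1}^n a_i\Bigr)\Bigl(\sum_{i=1}^n b_i\Bigr)\;\geq\;0
\]
is the standard Chebyshev-sum-inequality argument and settles the claim in one step; the Abel-summation alternative is also sound (the key observation that the partial sums $S_i=\sum_{j\leq i}(a_j-\bar a)$ are nonpositive for $i<n$ follows because the prefix averages of an increasing sequence are bounded above by the overall mean). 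Your side remark is also accurate: only the same-ordering of the two sequences is used, so the hypotheses $a_i>0$ and $b_i\geq 0$ are superfluous for the inequality itself, though harmless for the application in \eqref{equation2.1.2} where the $b_i$ play the role of ordered squared residuals.
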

\begin{lem}[\cite{wang2022nonlinear}]\label{lem2.5}
	Let $a=\{a_1,a_2,...,a_n\}$ and $b=\{b_1,b_2,...,b_n\}$ be two arrays with real components and satisfy $a_j\geq 0,b_j\textgreater 0,j\in \{1,2,...,n\}$, then the following inequality is established
	$$\sum\limits_{j=1}^n \frac{a_j}{b_j}\geq \frac{\sum_{j=1}^n a_j}{\sum_{j=1}^n b_j}.$$
\end{lem}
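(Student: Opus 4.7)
The plan is to prove the inequality by clearing the positive denominator and reducing the claim to the manifest nonnegativity of a collection of cross terms. First I would multiply both sides of the desired inequality by $\sum_{j=1}^n b_j$, which is strictly positive since every $b_j > 0$, to reduce the claim to the equivalent statement
\begin{equation*}
\Bigl(\sum_{j=1}^n \frac{a_j}{b_j}\Bigr)\Bigl(\sum_{k=1}^n b_k\Bigr) \;\geq\; \sum_{j=1}^n a_j.
\end{equation*}

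Next I would expand the double sum on the left as $\sum_{j,k} \frac{a_j b_k}{b_j}$ and split it into diagonal terms $(j=k)$ and off-diagonal terms $(j\neq k)$. The diagonal terms collapse to $\sum_j a_j$, matching the right-hand side exactly, so the inequality reduces to
\begin{equation*}
\sum_{\substack{j,k=1 \\ j\neq k}}^n \frac{a_j b_k}{b_j} \;\geq\; 0.
\end{equation*}
Each summand is nonnegative because $a_j\geq 0$, $b_k>0$, and $b_j>0$, so the inequality is immediate and equality holds precisely when $a_j b_k = 0$ for all $j\neq k$.

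As an alternative route I could note that $\frac{\sum_j a_j}{\sum_j b_j}$ is a convex combination of the ratios $\frac{a_j}{b_j}$ with weights $\frac{b_j}{\sum_k b_k}$, hence is bounded above by $\max_j \frac{a_j}{b_j}$, and since every $\frac{a_j}{b_j}\geq 0$ this maximum is bounded above by the sum $\sum_j \frac{a_j}{b_j}$. Either path is essentially a one-line argument; there is no genuine obstacle in this lemma, so I would keep the write-up short and emphasize only the algebraic identity and the nonnegativity observation.
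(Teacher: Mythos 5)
Your argument is correct: clearing the positive denominator, expanding the product $\bigl(\sum_j a_j/b_j\bigr)\bigl(\sum_k b_k\bigr)=\sum_{j,k}a_jb_k/b_j$, and observing that the diagonal recovers $\sum_j a_j$ while every off-diagonal term is nonnegative is a complete and airtight proof; the convex-combination alternative is equally valid. Note that the paper itself does not prove this lemma --- it is imported by citation from the reference on nonlinear randomized Kaczmarz methods --- so there is no in-paper proof to compare against, but your one-line algebraic argument is exactly the level of justification the statement warrants.
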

\subsubsection{Convergence analysis \uppercase\expandafter{\romannumeral1}}
\begin{assumption}\label{assumption2.1}
	The following assumptions hold.
	\begin{itemize}
		\item[(i)] Nonlinear function $f:\mathscr{D}(f)\subseteq \mathbb{R}^n \rightarrow \mathbb{R}^m$ satisfies the local tangential cone condition in a ball $\mathscr{B}_{\rho}(x_0)$.
		\item[(ii)]For $\forall x \in \mathscr{D}(f)$, $f'(x)$ is row bounded below and full column rank matrix.
	\end{itemize}
\end{assumption}
\begin{thm}\label{the2.1}
	Assume that $f(x)$ satisfies Assumption \ref{assumption2.1} and $f(x)=0$ is solvable in $\mathscr{B}_{\frac{\rho}{2}}(x_0)$. Then the iteration sequence $\{x_k\}_{k=0}^{\infty}$ generated by the NSKM method converges to a solution $x^*\in \mathscr{B}_{\frac{\rho}{2}}(x_0)$ of $f(x)$ in expectation. Moreover, the mean squared iteration error satisfies 
	\begin{equation*}
		E\|x_k-x^*\|^2 \leq(1-\frac{(1-2\eta)\sigma_{min}^2(f'(x_{k-1}))}{m(1+\eta)^2\|f'(x_{k-1})\|_F^2})E\|x_{k-1}-x^*\|^2, k=1,2,...
	\end{equation*}
	where $\eta=\mathop{max}\limits_{i}{\eta_i}\textless \frac{1}{2}\quad(i=1,2,...,m).$
\end{thm}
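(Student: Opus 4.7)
The plan is to combine the one-step descent inequality from Lemma \ref{lem2.2} with a careful expectation over the random sample and maximal-residual rule. Starting from \eqref{lem2.2.1} and using $\eta_{i_k}\leq\eta<\tfrac12$ to replace the index-dependent $\eta_{i_k}$ by the uniform $\eta$, one obtains
$$\|x_k-x^*\|^2 \leq \|x_{k-1}-x^*\|^2 - (1-2\eta)\frac{f_{i_k}^2(x_{k-1})}{\|\nabla f_{i_k}(x_{k-1})\|^2}.$$
Taking the conditional expectation $E_{k-1}[\cdot]$, the whole problem reduces to producing a lower bound on $E_{k-1}\!\left[f_{i_k}^2(x_{k-1})/\|\nabla f_{i_k}(x_{k-1})\|^2\right]$ in terms of $\|x_{k-1}-x^*\|^2$.

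For the denominator I would simply use the deterministic trivialization $\|\nabla f_{i_k}(x_{k-1})\|^2 \leq \sum_{j=1}^m\|\nabla f_j(x_{k-1})\|^2 = \|f'(x_{k-1})\|_F^2$, which is measurable with respect to the first $k-1$ iterations and can be pulled out of $E_{k-1}$. For the numerator I would exploit the argmax rule twice: first, since $i_k = \arg\max_{i\in\tau_k}|f_i(x_{k-1})|$ over a sample of size $\beta$,
$$f_{i_k}^2(x_{k-1}) = \max_{i\in\tau_k}f_i^2(x_{k-1}) \geq \frac{1}{\beta}\sum_{i\in\tau_k}f_i^2(x_{k-1}).$$
Since $\tau_k$ is drawn uniformly from the $\binom{m}{\beta}$ size-$\beta$ subsets of $\{1,\dots,m\}$, every index lies in $\tau_k$ with probability $\beta/m$, so $E_{k-1}\!\left[\sum_{i\in\tau_k}f_i^2(x_{k-1})\right] = (\beta/m)\|f(x_{k-1})\|^2$, and combining yields
$$E_{k-1}[f_{i_k}^2(x_{k-1})] \geq \frac{1}{m}\|f(x_{k-1})\|^2.$$

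Finally I would translate $\|f(x_{k-1})\|^2$ back into $\|x_{k-1}-x^*\|^2$ using the results already in hand: Corollary \ref{coroll2.1} with $x_1 = x_{k-1}$, $x_2 = x^*$ and $f(x^*)=0$ gives $\|f(x_{k-1})\|^2 \geq (1+\eta)^{-2}\|f'(x_{k-1})(x_{k-1}-x^*)\|^2$, and the full-column-rank hypothesis in Assumption \ref{assumption2.1}(ii) supplies $\|f'(x_{k-1})(x_{k-1}-x^*)\|^2 \geq \sigma_{\min}^2(f'(x_{k-1}))\|x_{k-1}-x^*\|^2$. Chaining these bounds yields the claimed contraction on $E_{k-1}\|x_k-x^*\|^2$, after which the tower law $E[E_{k-1}[\cdot]]=E[\cdot]$ gives the stated recursion on $E\|x_k-x^*\|^2$; iterating and using that the contraction factor is strictly less than one (guaranteed by the row-bounded-below and full-column-rank conditions together with $\eta<\tfrac12$) yields convergence in expectation. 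The main obstacle I expect is the handling of the random argmax over $\tau_k$: the key subtlety is that because $i_k$ picks the \emph{largest} residual in the sample rather than a uniform one, the natural ``max $\geq$ average'' trick combined with the inclusion probability $\beta/m$ is what makes the argument go through, and this is also what explains why the rate in the theorem is independent of the sample size $\beta$.
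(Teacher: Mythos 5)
Your proposal is correct, and it reaches the stated recursion by a genuinely different route for the key step, namely the lower bound on $E_{k-1}\bigl[f_{i_k}^2(x_{k-1})/\|\nabla f_{i_k}(x_{k-1})\|^2\bigr]$. The paper keeps the ratio intact and works with order statistics: it computes the exact selection probability $\binom{j+\beta-1}{\beta-1}/\binom{m}{\beta}$ of the $(j+\beta)$th smallest residual under the argmax rule, then invokes Lemma \ref{lem2.5} (a mediant-type inequality), Lemma \ref{lem2.3} (a Chebyshev-type sum inequality for similarly ordered sequences), and the Pascal column-sum identity $\sum_{l=0}^{m-\beta}\binom{l+\beta-1}{\beta-1}=\binom{m}{\beta}$ to arrive at $\frac{1}{V_{k-1}}\frac{\|f(x_{k-1})\|^2}{\|f'(x_{k-1})\|_F^2}$ with $V_{k-1}=\max\{m-s_{k-1},\,m-\beta+1\}\leq m$. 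You instead decouple numerator and denominator: the pointwise bound $\|\nabla f_{i_k}(x_{k-1})\|^2\leq\|f'(x_{k-1})\|_F^2$ lets you pull the denominator out of the conditional expectation, and the combination of $\max_{i\in\tau_k}f_i^2\geq\frac{1}{\beta}\sum_{i\in\tau_k}f_i^2$ with the inclusion probability $P(i\in\tau_k)=\beta/m$ gives $E_{k-1}[f_{i_k}^2(x_{k-1})]\geq\frac{1}{m}\|f(x_{k-1})\|^2$ in two lines. Both routes land on the same bound $\frac{1}{m}\frac{\|f(x_{k-1})\|^2}{\|f'(x_{k-1})\|_F^2}$ once the paper applies $V_{k-1}\leq m$, and the remainder of your argument (Corollary \ref{coroll2.1}, the singular-value bound from full column rank, the tower law, and the verification that the contraction factor lies in $(0,1)$) matches the paper's. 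What your argument buys is brevity and transparency; what the paper's buys is the sharper intermediate constant $1/V_{k-1}\geq 1/m$, which retains some dependence on the sample size $\beta$ and on the number of nonzero residuals, even though this refinement is discarded in the final statement of the theorem --- consistent with your closing observation that the advertised rate is $\beta$-independent.
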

\begin{proof}
	From Lemma \ref{lem2.2}, we have
	$$\|x_{k}-x^*\|^2\leq \|x_{k-1}-x^*\|^2- (1-2\eta_{i_k})\frac{f_{i_{k}}^2(x_{k-1})}{\|\nabla f_{i_{k}}(x_{k-1})\|^2}.$$
	
	By taking the conditional expectation on both sides of the above formula, we obtain
	$$E_{k-1} \|x_k-x^*\|^2 \leq\|x_{k-1}-x^*\|^2-E_{k-1}(1-2\eta_{i_k})\frac{f_{i_k}^2(x_{k-1})}{\|\nabla f_{i_k}(x_{k-1})\|^2}.$$
	
	Since  $\eta_{i_k}\in [0,\eta)$ $ (\eta=\mathop{max}\limits_{i}\eta_i\textless \frac{1}{2})$, we have that
	\begin{equation}\label{equation2.1.1}
		E_{k-1} \|x_k-x^*\|^2 
		\leq\|x_{k-1}-x^*\|^2-(1-2\eta)E_{k-1}\frac{f_{i_k}^2(x_{k-1})}{\|\nabla f_{i_k}(x_{k-1})\|^2}.
	\end{equation}
	
	Next, we consider $E_{k-1}\frac{f_{i_k}^2(x_{k-1})}{\|\nabla f_{i_k}(x_{k-1})\|^2}$. Let $f_{j}^2(x_{k-1})$ denote the $(j+\beta)th$ smallest entry of the $\{f_i^2(x_{k-1})\}_{i=1}^m$ (i.e., if we order all entries of $\{f_i^2(x_{k-1})\}_{i=1}^m$ from smallest to largest, $f_{j}^2(x_{k-1})$ is in the $(j+\beta)th$ position). Each sample has equal probability of being selected, $\binom{m}{\beta}^{-1}$. However, the selected frequency of each entry of $\{f_i^2(x_{k-1})\}_{i=1}^m$ depends on its size. For example, the largest entry of $\{f_i^2(x_{k-1})\}_{i=1}^m$ can be selected from all samples in which it appears, while the $\beta$th smallest entry of $\{f_i^2(x_{k-1})\}_{i=1}^m$ can be selected from only one sample. Therefore, we have that
	\begin{align}\label{equation2.1.2}
		E_{k-1}\frac{f_{i_k}^2(x_{k-1})}{\|\nabla f_{i_k}(x_{k-1})\|^2}&=\sum_{j=0}^{m-\beta}\frac{\binom{j+\beta-1}{\beta-1}}{\binom{m}{\beta}}\frac{f_{j}^2(x_{k-1})}{\|\nabla f_{j}(x_{k-1})\|^2}\notag \\
		&=\frac{1}{\binom{m}{\beta}}\sum_{j=0}^{m-\beta}\binom{j+\beta-1}{\beta-1}\frac{f_{j}^2(x_{k-1})}{\|\nabla f_{j}(x_{k-1})\|^2}\notag \\
		&\geq\frac{1}{\binom{m}{\beta}}\frac{\sum_{j=0}^{m-\beta} \binom{j+\beta-1}{\beta-1} f_{j}^2(x_{k-1})}{\sum_{j=0}^{m-\beta} \|\nabla f_{j}(x_{k-1})\|^2}\notag\\
		&\geq \frac{1}{\binom{m}{\beta}}\frac{\sum_{j=0}^{m-\beta} \sum_{l=0}^{m-\beta} \frac{\binom{l+\beta-1}{\beta-1}}{m-\beta+1}  f_{j}^2(x_{k-1})}{\|f'(x_{k-1})\|_F^2}\notag\\
		&=\sum_{j=0}^{m-\beta}\frac{1}{m-\beta+1}\frac{f_j^2(x_{k-1})}{\|f'(x_{k-1})\|_F^2}, 
	\end{align} 
	where the first inequality comes from Lemma \ref{lem2.5}, the second inequality is from Lemma \ref{lem2.3}, because $\{\binom{j+\beta-1}{\beta-1}\}_{j=0}^{m-\beta}$
	is strictly increasing and $f_j^2(x_{k-1})$ is non-decreasing, and the last equality follows from the fact that $\sum_{l=0}^{m-\beta}\binom{l+\beta-1}{\beta-1}=\binom{m}{\beta}$, which is known as the column-sum property of Pascal's triangle. 
	
	Let $s_{k-1}$ be the number of zero entries in $f(x_{k-1})$ and $V_{k-1}=max\{m-s_{k-1}, m-\beta+1\}$, then from \eqref{equation2.1.2} we can derive 
	\begin{align}\label{equation2.1.3}
		E_{k-1}\frac{f_{i_k}^2(x_{k-1})}{\|\nabla f_{i_k}(x_{k-1})\|^2}&\geq \frac{1}{m-\beta+1}min\left\{\frac{m-\beta+1}{m-s_{k-1}},1\right\}\frac{\|f(x_{k-1})\|^2}{\|f'(x_{k-1})\|_F^2}\notag\\
		&=\frac{1}{V_{k-1}}\frac{\|f(x_{k-1})\|^2}{\|f'(x_{k-1})\|_F^2}.
	\end{align}
	
	In accordance with formula \eqref{equation2.1.3}, the formula \eqref{equation2.1.1} then further results in the estimate
	\begin{align*}
		&\quad E_{k-1} \|x_k-x^*\|^2 \\
		&\leq\|x_{k-1}-x^*\|^2-(1-2\eta)	E_{k-1}\frac{f_{i_k}^2(x_{k-1})}{\|\nabla f_{i_k}(x_{k-1})\|^2}\\
		&\leq\|x_{k-1}-x^*\|^2-(1-2\eta)\frac{1}{V_{k-1}}\frac{\|f(x_{k-1})\|^2}{\|f'(x_{k-1})\|_F^2}\\
		&\leq\|x_{k-1}-x^*\|^2-\frac{1-2\eta}{m}\frac{\|f(x_{k-1})\|^2}{\|f'(x_{k-1})\|_F^2}\\ 
		&=\|x_{k-1}-x^*\|^2-\frac{1-2\eta}{m}\frac{\|f(x_{k-1})-f(x^*)\|^2}{\|f'(x_{k-1})\|_F^2}\\
		&\leq\|x_{k-1}-x^*\|^2-\frac{1-2\eta}{m\|f'(x_{k-1})\|_F^2}\frac{1}{(1+\eta)^2}\|f'(x_{k-1})(x_{k-1}-x^*) \|^2\\
		&\leq\|x_{k-1}-x^*\|^2-\frac{1-2\eta}{m\|f'(x_{k-1})\|_F^2}\frac{1}{(1+\eta)^2}\sigma_{min}^2(f'(x_{k-1}))\|x_{k-1}-x^*\|^2\\
		&=(1-\frac{(1-2\eta)\sigma_{min}^2(f'(x_{k-1}))}{m(1+\eta)^2\|f'(x_{k-1})\|_F^2})\|x_{k-1}-x^*\|^2,
	\end{align*}
	where the third inequality follows from the fact that $V_{k-1}\leq m$, the fourth inequality comes from Corollary \ref{coroll2.1}, the last inequality is from the fact that $\|f'(x_{k-1})(x_{k-1}-x^*)\|^2\geq\sigma_{min}^2(f'(x_{k-1}))\|(x_{k-1}-x^*)\|^2 $ when $f'(x_{k-1})$ is full column rank.
	
	By taking full expectation on the both sides, we can further obtain
	\begin{equation*}
		E\|x_k-x^*\|^2 \leq(1-\frac{(1-2\eta)\sigma_{min}^2(f'(x_{k-1}))}{m(1+\eta)^2\|f'(x_{k-1})\|_F^2})E\|x_{k-1}-x^*\|^2.
	\end{equation*}
	
	Since $\sigma_{min}^2(f'(x_{k-1}))\textless \|f'(x_{k-1})\|_F^2$, and $0\textless 1-2\eta \textless (1+\eta)^2$, we have 
	$$0\textless 1-\frac{(1-2\eta)\sigma_{min}^2(f'(x_{k-1}))}{m(1+\eta)^2\|f'(x_{k-1})\|_F^2}\textless 1.$$
	Therefore, the iteration sequence $\{x_k\}_{k=0}^\infty$ generated by the NSKM method converges to $x^*$ in expectation.
\end{proof}
\begin{remark}
	\item[(1)] To ensure that $\|\nabla f_{i_{k+1}} (x_k)\|\neq 0$ in $x_{k+1}=x_{k}- \frac{f_{i_{k+1}}(x_k)}{\|\nabla f_{i_{k+1}}(x_k)\|^2} \nabla f_{i_{k+1}}(x_k)^T$ $(k=0,1,2,...)$, we require that $f'(x)$ is row bounded below for every $x\in \mathscr{D}(f)$. 
	\item[(2)] For proving the desired convergence rate of the NSKM method, we need that $f'(x)$ is full column rank matrix for every $ x\in \mathscr{D}(f)$.
\end{remark}
\subsubsection{Convergence analysis \uppercase\expandafter{\romannumeral2}}
\begin{assumption}\label{assum2.2}
	The following assumptions hold.
	\begin{itemize}
		\item[(i)] $f_i:\mathscr{D}(f)\subseteq \mathbb{R}^n \rightarrow \mathbb{R}$ $(i=1,2,...,m)$ on the nonempty convex set $\mathscr{D}(f)$ are convex functions.
		\item[(ii)]For $\forall x \in \mathscr{D}(f), f'(x)$ is row bounded below.
		\item[(iii)] $f_i(x)\geq 0$ $(i=1,2,...,m)$ for $\forall x\in \mathscr{D}(f).$
	\end{itemize}
\end{assumption}
\begin{thm}\label{the2.2}
	Suppose that Assumption \ref{assum2.2} holds and there exists $x^*$ such that $f(x^*)=0$. Then the iteration sequence $\{x_k\}_{k=0}^{\infty}$ generated by the NSKM method converges to a solution $x^*$ of $f(x)$. Moreover, iteration error satisfies 
	\begin{equation*}
		\|x_k-x^*\|^2 \leq\|x_{k-1}-x^*\|^2-\frac{f_{i_k}^2(x_{k-1})}{\|\nabla f_{i_k}(x_{k-1})\|^2}, k=1,2,....
	\end{equation*}
\end{thm}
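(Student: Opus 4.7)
The plan is to follow the expand-and-estimate strategy used in Lemma \ref{lem2.2}, but to replace the tangential-cone bound on the cross term with the convex gradient inequality of Lemma \ref{lemm2.3.3}. First I will expand
\begin{align*}
\|x_k-x^*\|^2 = \|x_{k-1}-x^*\|^2 + \|x_k-x_{k-1}\|^2 + 2\langle x_k-x_{k-1},\, x_{k-1}-x^*\rangle
\end{align*}
and substitute the NSKM update $x_k-x_{k-1}=-\frac{f_{i_k}(x_{k-1})}{\|\nabla f_{i_k}(x_{k-1})\|^2}\nabla f_{i_k}(x_{k-1})^T$, so that the squared-step term contributes $+f_{i_k}^2(x_{k-1})/\|\nabla f_{i_k}(x_{k-1})\|^2$ and the cross term becomes $-2\frac{f_{i_k}(x_{k-1})}{\|\nabla f_{i_k}(x_{k-1})\|^2}\nabla f_{i_k}(x_{k-1})(x_{k-1}-x^*)$. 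Everything then reduces to producing a lower bound on the cross term.

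The heart of the argument is supplying this bound without the tangential cone condition. Applying Lemma \ref{lemm2.3.3} to the convex function $f_{i_k}$ at $y=x_{k-1}$, $x=x^*$, and using $f_{i_k}(x^*)=0$, yields
\begin{align*}
\nabla f_{i_k}(x_{k-1})(x_{k-1}-x^*) \geq f_{i_k}(x_{k-1})-f_{i_k}(x^*) = f_{i_k}(x_{k-1}) \geq 0,
\end{align*}
where the final inequality uses Assumption \ref{assum2.2}(iii); Assumption \ref{assum2.2}(ii) guarantees that the denominator $\|\nabla f_{i_k}(x_{k-1})\|^2$ is strictly positive. Multiplying through by the non-negative scalar $2f_{i_k}(x_{k-1})/\|\nabla f_{i_k}(x_{k-1})\|^2$ and substituting back collapses the squared-step and cross terms into $-f_{i_k}^2(x_{k-1})/\|\nabla f_{i_k}(x_{k-1})\|^2$, which is exactly the recursion asserted in the theorem.

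For the convergence of $\{x_k\}$ itself I will use Fej\'er monotonicity. The per-step inequality shows that $\{\|x_k-x^*\|\}$ is non-increasing, hence convergent, and $\{x_k\}$ is bounded; telescoping forces $\sum_k f_{i_k}^2(x_{k-1})/\|\nabla f_{i_k}(x_{k-1})\|^2 < \infty$, and combining this with the row-bounded-below property of $f'$ from Assumption \ref{assum2.2}(ii) gives $f_{i_k}(x_{k-1})\to 0$. The main obstacle, and the step I expect to be the most delicate, is upgrading this per-selected-index decay to $\|f(x_{k-1})\|\to 0$ and finally to $x_k\to x^*$: without the tangential cone condition there is no quantitative error bound tying $\|f(x_{k-1})\|$ to $\|x_{k-1}-x^*\|$ as there was in Theorem \ref{the2.1}. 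I plan to exploit the random maximal-residual sampling rule: conditionally on the past, every coordinate enters $\tau_k$ with positive probability, so continuity of $f$ together with the boundedness of $\{x_k\}$ forces each $f_i$ to vanish along any limit point almost surely, and Fej\'er monotonicity with respect to the solution set then pins the limit down to $x^*$.
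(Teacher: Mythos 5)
Your derivation of the per-step inequality is correct and is essentially the paper's own argument: both expand $\|x_k-x^*\|^2-\|x_{k-1}-x^*\|^2$ into the squared step plus the cross term, and both control the cross term by applying the convex gradient inequality of Lemma \ref{lemm2.3.3} at $x_{k-1}$ together with $f_{i_k}(x^*)=0$ and the nonnegativity from Assumption \ref{assum2.2}(iii). Whether one adds and subtracts $2f_{i_k}^2(x_{k-1})/\|\nabla f_{i_k}(x_{k-1})\|^2$ as the paper does, or multiplies the gradient inequality by the nonnegative factor $2f_{i_k}(x_{k-1})/\|\nabla f_{i_k}(x_{k-1})\|^2$ as you do, is purely cosmetic.

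The one place you genuinely diverge is the final claim that $x_k\to x^*$. The paper's proof stops at the monotone recursion and never argues this point at all, whereas you at least sketch a Fej\'er-monotonicity argument. Be aware, however, that your sketch is a plan rather than a proof: telescoping gives decay of $f_{i_k}(x_{k-1})$ only along the selected indices, and upgrading this to $f_i(\bar x)=0$ for every $i$ at every limit point needs a real argument (for instance, that each index belongs to $\tau_k$ infinitely often almost surely, combined with extraction of a common convergent subsequence on which that index is selected); only after a limit point is known to lie in the solution set does Fej\'er monotonicity deliver convergence of the whole sequence. So the part of your write-up that is fully rigorous coincides with what the paper proves, and the part that goes beyond it is the part that remains to be filled in.
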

\begin{proof}
	\begin{align*}
		&\quad\|x_{k}-x^*\|^2-\|x_{k-1}-x^*\|^2\\
		&=\frac{f_{i_{k}}^2(x_{k-1})}{\|\nabla f_{i_{k}}(x_{k-1})\|^2}-2\frac{f_{i_{k}}(x_{k-1})}{\|\nabla f_{i_{k}}(x_{k-1})\|^2}\nabla f_{i_{k}}(x_{k-1})(x_{k-1}-x^*)\\
		&=\frac{f_{i_{k}}^2(x_{k-1})}{\|\nabla f_{i_{k}}(x_{k-1})\|^2}+2\frac{f_{i_{k}}(x_{k-1})}{\|\nabla f_{i_{k}}(x_{k-1})\|^2}(f_{i_{k}}(x_{k-1})+\nabla f_{i_{k}}(x_{k-1})(x^*-x_{k-1}))\\
		&-2\frac{f_{i_{k}}(x_{k-1})}{\|\nabla f_{i_{k}}(x_{k-1})\|^2}f_{i_{k}}(x_{k-1})\\
		&\leq\frac{f_{i_{k}}^2(x_{k-1})}{\|\nabla f_{i_{k}}(x_{k-1})\|^2}-2\frac{f_{i_{k}}^2(x_{k-1})}{\|\nabla f_{i_{k}}(x_{k-1})\|^2}\\
		&=-\frac{f_{i_{k}}^2(x_{k-1})}{\|\nabla f_{i_{k}}(x_{k-1})\|^2},		
	\end{align*}	
	where the first equality is obtained by the proof of Lemma \ref{lem2.2} and the first inequality comes from Lemma \ref{lemm2.3.3} and (iii) of Assumption \ref{assum2.2}.
\end{proof}
\begin{remark}\label{remark2.2}
	\item[(1)]When $f_i(x)$ is a convex function or satisfies \eqref{def2.2.1} in a ball $\mathscr{B}_{\rho}(x_0)$ for $\forall i\in\{1,2,...,m\}$, by Theorem \ref{the2.1} and Theorem \ref{the2.2}, we can get that the iteration sequence generated by the NSKM method also converges to a solution of \eqref{equation2}. The result be stated in the following Corollary \ref{corollary2.2}.
	\item[(2)]When $f(x)$ satisfies Assumption \ref{assum2.2} or the conditions of Corollary \ref{corollary2.2}, the convergences of the NRK method and the NURK method can also be guaranteed. 
\end{remark}
\begin{coroll}\label{corollary2.2}
	Suppose that $f_i(x):\mathscr{D}(f_i)\to \mathbb{R}$ satisfies \eqref{def2.2.1} in a ball $\mathscr{B}_{\rho}(x_0)$ or is a nonnegative convex function. Let $f'(x)$ is row bounded below for $\forall x\in \mathscr{D}(f)$ and $f(x)=0$ is solvable in $\mathscr{B}_{\frac{\rho}{2}}(x_0)$. Then the iteration sequence $\{x_k\}_{k=0}^{\infty}$ generated by the NSKM method converges to a solution $x^*\in \mathscr{B}_{\frac{\rho}{2}}(x_0)$ of $f(x)$.
\end{coroll}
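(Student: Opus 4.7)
The plan is to reduce the corollary to a controlled combination of Theorem \ref{the2.1} and Theorem \ref{the2.2} by partitioning the indices. Set $I_{1}=\{i:f_{i}\text{ satisfies }\eqref{def2.2.1}\text{ on }\mathscr{B}_{\rho}(x_{0})\}$ and $I_{2}=\{i:f_{i}\text{ is nonnegative and convex on }\mathscr{D}(f)\}$, so that $I_{1}\cup I_{2}=\{1,\dots,m\}$, and let $\eta=\max_{i\in I_{1}}\eta_{i}<\frac12$. Since $f(x^{*})=0$ and $f_{i}\ge 0$ for $i\in I_{2}$, the point $x^{*}$ remains a common zero of all $f_{i}$, so the algebraic identities used in Lemma \ref{lem2.2} and Theorem \ref{the2.2} both remain valid with respect to $x^{*}$.

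Next I would derive a single per-step inequality that works for both types of indices. If $i_{k}\in I_{1}$, Lemma \ref{lem2.2} gives
\begin{equation*}
\|x_{k}-x^{*}\|^{2}\le \|x_{k-1}-x^{*}\|^{2}-(1-2\eta_{i_{k}})\frac{f_{i_{k}}^{2}(x_{k-1})}{\|\nabla f_{i_{k}}(x_{k-1})\|^{2}},
\end{equation*}
and if $i_{k}\in I_{2}$, the convex argument in the proof of Theorem \ref{the2.2} (using Lemma \ref{lemm2.3.3} and $f_{i_{k}}(x^{*})=0\le f_{i_{k}}(x_{k-1})$) gives the same shape of inequality with coefficient $1$. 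Setting $c=\min\{1,\,1-2\eta\}>0$, both cases yield the uniform bound
\begin{equation*}
\|x_{k}-x^{*}\|^{2}\le \|x_{k-1}-x^{*}\|^{2}-c\,\frac{f_{i_{k}}^{2}(x_{k-1})}{\|\nabla f_{i_{k}}(x_{k-1})\|^{2}}.
\end{equation*}

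From here the invariance of the ball follows by the same induction used in Lemma \ref{lem2.2}: monotonicity of $\|x_{k}-x^{*}\|$ together with $x^{*}\in\mathscr{B}_{\rho/2}(x_{0})$ forces $x_{k}\in\mathscr{B}_{\rho}(x_{0})$, so the local tangential cone hypothesis stays available for indices in $I_{1}$. Telescoping the displayed inequality, together with the row-bounded-below assumption giving $\|\nabla f_{i_{k}}(x_{k-1})\|\ge \varepsilon$, yields
\begin{equation*}
\sum_{k\ge 1} f_{i_{k}}^{2}(x_{k-1})\le \frac{\varepsilon^{2}}{c}\|x_{0}-x^{*}\|^{2}<\infty,
\end{equation*}
so $f_{i_{k}}(x_{k-1})\to 0$. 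Combining this with the maximal-residual selection rule $i_{k}=\arg\max_{i\in\tau_{k}}|f_{i}(x_{k-1})|$ over a uniformly drawn sample and the same sampling calculation used in \eqref{equation2.1.2}--\eqref{equation2.1.3} of Theorem \ref{the2.1} produces $E\|f(x_{k-1})\|^{2}\to 0$, and hence (via Corollary \ref{coroll2.1} applied to indices in $I_{1}$ together with the monotone decrease supplied by $I_{2}$) the sequence $\{x_{k}\}$ converges to $x^{*}\in\mathscr{B}_{\rho/2}(x_{0})$.

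The main obstacle I expect is the final convergence step, because unlike Theorem \ref{the2.1} we no longer have a clean $\sigma_{\min}(f'(x_{k-1}))$-contraction on the whole gradient: Corollary \ref{coroll2.1} applies only to the $I_{1}$-block of $f$, while the $I_{2}$-block contributes only the convexity-based decrease. The delicate point is therefore justifying that $\|x_{k-1}-x^{*}\|$ can be controlled by the residuals of both blocks simultaneously; I would handle this by working with the full vector $f$ and using the $I_{2}$-inequality $f_{i}(x_{k-1})\ge \nabla f_{i}(x_{k-1})(x_{k-1}-x^{*})$ from Lemma \ref{lemm2.3.3} to reinstate the gradient-based lower bound $\|f(x_{k-1})\|^{2}\gtrsim \|f'(x_{k-1})(x_{k-1}-x^{*})\|^{2}$, and then invoking the row-bounded-below hypothesis to pass from residual vanishing to iterate convergence.
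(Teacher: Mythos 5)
The paper itself gives no proof of Corollary \ref{corollary2.2}: Remark \ref{remark2.2}(1) simply asserts that it follows by combining Theorem \ref{the2.1} and Theorem \ref{the2.2}. Your index partition $I_1\cup I_2$ and the resulting uniform per-step decrease $\|x_k-x^*\|^2\le\|x_{k-1}-x^*\|^2-c\,f_{i_k}^2(x_{k-1})/\|\nabla f_{i_k}(x_{k-1})\|^2$ with $c=\min\{1,1-2\eta\}$, together with the induction keeping $x_k\in\mathscr{B}_\rho(x_0)$, is exactly the natural fleshing-out of that remark and is correct as far as it goes.

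However, your concluding steps contain two directional errors. First, from the telescoped bound $c\sum_k f_{i_k}^2(x_{k-1})/\|\nabla f_{i_k}(x_{k-1})\|^2\le\|x_0-x^*\|^2$ and the row-bounded-below hypothesis $\|\nabla f_{i_k}(x_{k-1})\|\ge\varepsilon$ you cannot conclude $\sum_k f_{i_k}^2(x_{k-1})<\infty$: a \emph{lower} bound on the gradient norms makes $f_{i_k}^2/\|\nabla f_{i_k}\|^2\le f_{i_k}^2/\varepsilon^2$, which is the wrong way around; you need an \emph{upper} bound $\|\nabla f_{i_k}(x_{k-1})\|\le M$ (available if $f'$ is continuous on the compact ball $\mathscr{B}_\rho(x_0)$, but that is an extra hypothesis you must invoke). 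Second, and more seriously, your proposed repair of the final step rests on the inequality $f_i(x_{k-1})\ge\nabla f_i(x_{k-1})(x_{k-1}-x^*)$ for $i\in I_2$, but Lemma \ref{lemm2.3.3} with $f_i(x^*)=0$ gives precisely the reverse, $f_i(x_{k-1})\le\nabla f_i(x_{k-1})(x_{k-1}-x^*)$. Convexity therefore bounds the residual from \emph{above} by the linearized term and cannot reinstate $\|f(x_{k-1})\|^2\gtrsim\|f'(x_{k-1})(x_{k-1}-x^*)\|^2$ on the $I_2$-block; note also that the corollary does not assume $f'$ has full column rank, so the $\sigma_{\min}$ step of Theorem \ref{the2.1} is unavailable even if it did. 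The clean way to finish is a Fej\'er-monotonicity argument: the uniform decrease holds with $x^*$ replaced by any zero $z\in\mathscr{B}_{\rho/2}(x_0)$, so $\{x_k\}$ is bounded and $\|x_k-z\|$ is nonincreasing for every such $z$; the summability of the decrements plus the sampling estimate forces the residuals to vanish, so by continuity every accumulation point $\bar x$ of $\{x_k\}$ is a zero of $f$; applying monotonicity with $z=\bar x$ then upgrades subsequential convergence to convergence of the whole sequence. This is also the argument needed to turn the monotone decrease in Theorem \ref{the2.2} into an actual convergence statement, which the paper likewise leaves implicit.
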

\section{The variants of the NSKM method}
\label{section3}
In this section, we will provide two variants of the NSKM method that converge to a solution of \eqref{equ1}. We note that a projection onto the closed convex sets has one famous nonexpansive property, inspired by which, we present the PSKM method. Furthermore, in \cite{qin2020randomized}, Qin and Etesami presented a randomized accelerated projected algorithm which had  the faster convergence rate compared with the alternating projected method. Thus, we devise the APSKM method.
\subsection{The PSKM method}
The PSKM method can be splited into two computational stages. The first stage uses the one step NSKM update for the nonlinear equation $f(x)=0$ and gets $x_{k-\frac{1}{2}}$, and the second stage outputs $x_k$ by utilizing one step randomized projection onto the finite nonempty closed convex sets $C_i$ $(i=1,2,...,k_c)$. The pseudo-code of the PSKM method is given in the following Algorithm \ref{alg3.1.1}.
\begin{algorithm}
	\caption{The projected sampling Kaczmarz-Motzkin (PSKM) method }
	\label{alg3.1.1}
	\begin{algorithmic}[1]
		\Require $f(x)$, $\beta$, $x_0$, $k=1$, number of the nonempty closed convex sets $k_c$ and maximum iteration steps $T$
		\Ensure $x_k$.
		\While{iteration termination criterion does not hold and $k\leq T$}  
		\State Choose a sample of $\beta$ constraints, $\tau_k$, uniformly at random from all entrys of $f(x)$
		\State Compute residual of the selected sample $r_{\tau_k}=-f_{\tau_k}(x_{k-1})$
		\State Set $i_{k}=\mathop{argmax}\limits_{i\in\tau_k}{\mid r_{i}\mid}$
		\State Compute gradient $g_{i_k}=\nabla f_{i_k}(x_{k-1})$.
		\State Set $x_{k-\frac{1}{2}}=x_{k-1}+ \frac{r_{i_k}}{\|g_{i_k}\|^2}g_{i_k}^T$
		\State Choose a indicator, $\alpha_k$, uniformly at random from the set $\{1,2,...,k_c\}$
		\State$x_k=P_{C_{\alpha_k}}(x_{k-\frac{1}{2}})$
		\State k=k+1
		\EndWhile		
	\end{algorithmic}
\end{algorithm}
\subsubsection{Convergence analysis of the PSKM method}
For proving the convergence of the PSKM method, we will first describe a crucial lemma.
\begin{lem}[\cite{qin2020randomized}]\label{lem3.1}
	Given a nonempty closed convex set $X\subset \mathbb{R}^d$ and a point $y\in \mathbb{R}^d$, the following relations hold for the projection $P_X(y)$ of the point $y$ on $X$ and for all $x\in X$,
	$$\langle P_X(y)-y, x-P_X(y) \rangle \geq 0,$$
	$$\|P_X(x)-P_X(y)\|^2\leq \|x-y\|^2-\|y-P_X(y)\|^2,$$
	with equality in the second relation if and only if $x=P_X(y)$ or $y\in X$.
\end{lem}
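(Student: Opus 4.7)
The plan is to recognize both inequalities as standard consequences of the variational (first-order optimality) characterization of the metric projection onto a nonempty closed convex set, and to derive the second relation from the first by a single algebraic identity.

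First I would prove the angle inequality $\langle P_X(y) - y, x - P_X(y) \rangle \geq 0$. Fix any $x \in X$ and any $t \in [0,1]$; by convexity of $X$ the point $z_t = (1-t) P_X(y) + t x$ lies in $X$. Since $P_X(y)$ minimizes the squared distance to $y$ over $X$, the scalar function $\varphi(t) = \|z_t - y\|^2$ attains its minimum over $[0,1]$ at $t = 0$. Writing $z_t - y = (P_X(y) - y) + t(x - P_X(y))$ and expanding,
$$\varphi(t) = \|P_X(y) - y\|^2 + 2t \langle P_X(y) - y, x - P_X(y) \rangle + t^2 \|x - P_X(y)\|^2,$$
so the one-sided derivative condition $\varphi'(0^+) \geq 0$ immediately yields the claim.

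For the second inequality, note first that $x \in X$ implies $P_X(x) = x$, so the left-hand side equals $\|x - P_X(y)\|^2$. Using the splitting $x - y = (x - P_X(y)) + (P_X(y) - y)$ and expanding the norm,
$$\|x - y\|^2 = \|x - P_X(y)\|^2 + 2 \langle x - P_X(y), P_X(y) - y \rangle + \|P_X(y) - y\|^2.$$
The cross term is nonnegative by the first part, so rearranging gives
$$\|x - P_X(y)\|^2 \leq \|x - y\|^2 - \|y - P_X(y)\|^2,$$
which is the desired estimate. The sufficient conditions for equality are immediate from the identity above: any equality case forces $\langle x - P_X(y), P_X(y) - y \rangle = 0$, which certainly holds whenever $x = P_X(y)$ (first factor vanishes) or $y \in X$ so that $P_X(y) = y$ (second factor vanishes).

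I do not anticipate any serious obstacle here; both statements reduce to expanding a single squared norm and invoking the first-order optimality condition for the projection. The only mildly delicate point is the equality discussion, which I would simply handle by tracing the cross term in the above identity back to the two degenerate cases in which it vanishes for obvious reasons.
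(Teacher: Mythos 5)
The paper does not actually prove this lemma --- it is quoted from the cited reference \cite{qin2020randomized} and used as a black box, and no proof appears in the body or the appendix --- so there is no in-paper argument to compare yours against. Your derivation of the two displayed inequalities is the standard one and is correct: the one-sided condition $\varphi'(0^+)\geq 0$ at the constrained minimizer $t=0$ yields the variational inequality, and expanding $\|x-y\|^2$ along the splitting $x-y=(x-P_X(y))+(P_X(y)-y)$, together with $P_X(x)=x$ for $x\in X$, converts the nonnegativity of the cross term into the distance estimate. The one place your argument falls short of the statement is the equality clause: the lemma asserts equality \emph{if and only if} $x=P_X(y)$ or $y\in X$, and you verify only the ``if'' direction. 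In fact the ``only if'' direction is false as stated: take $X=\{z\in\mathbb{R}^2:z_1\leq 0\}$, $y=(1,0)$, so that $P_X(y)=(0,0)$, and $x=(0,1)\in X$; then $\|x-P_X(y)\|^2=1=\|x-y\|^2-\|y-P_X(y)\|^2$, yet $x\neq P_X(y)$ and $y\notin X$. As your own identity shows, equality is equivalent precisely to the vanishing of the cross term $\langle x-P_X(y),\,P_X(y)-y\rangle$, which is strictly weaker than the stated disjunction. Since the paper only ever invokes the inequality itself (in Theorem \ref{theo3.1} and in the proof of Lemma \ref{lem3.2}), this defect is inherited from the source and affects nothing downstream; your proof covers everything that is actually used.
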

\begin{thm}\label{theo3.1}
	Suppose that $f(x)$ satisfies Assumption \ref{assumption2.1} and \eqref{equ1} is solvable in $\mathscr{B}_{\frac{\rho}{2}}(x_0)$. Then the iteration sequence $\{x_k\}_{k=0}^{\infty}$ generated by the PSKM method converges to a solution $x^*\in \mathscr{B}_{\frac{\rho}{2}}(x_0)$ of \eqref{equ1} in expectation. Moreover, the mean squared iteration error satisfies 
	\begin{equation*}
		E\|x_k-x^*\|^2 \leq(1-\frac{(1-2\eta)\sigma_{min}^2(f'(x_{k-1}))}{m(1+\eta)^2\|f'(x_{k-1})\|_F^2})E\|x_{k-1}-x^*\|^2, k=1,2,... 
	\end{equation*}
	where $\eta=\mathop{max}\limits_i{\eta_i}\textless \frac{1}{2}\quad(i=1,2,...,m).$
\end{thm}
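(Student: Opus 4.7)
The plan is to reduce Theorem~\ref{theo3.1} to Theorem~\ref{the2.1} by exploiting the fact that the projection step is nonexpansive with respect to any point that already lies in $C_{\alpha_k}$. Concretely, writing $x_{k-\frac12}$ for the NSKM intermediate iterate and $x_k = P_{C_{\alpha_k}}(x_{k-\frac12})$, I would first observe that since $x^\ast\in C=\bigcap_i C_i\subseteq C_{\alpha_k}$, we have $P_{C_{\alpha_k}}(x^\ast)=x^\ast$. Applying the second relation of Lemma~\ref{lem3.1} with $x=x^\ast$ and $y=x_{k-\frac12}$ then yields
\begin{equation*}
\|x_k-x^\ast\|^2 \;=\; \|P_{C_{\alpha_k}}(x^\ast)-P_{C_{\alpha_k}}(x_{k-\frac12})\|^2 \;\le\; \|x_{k-\frac12}-x^\ast\|^2 - \|x_{k-\frac12}-x_k\|^2 \;\le\; \|x_{k-\frac12}-x^\ast\|^2.
\end{equation*}
This holds pointwise for every realization of $\alpha_k$, so the projection step can only improve the distance to $x^\ast$.

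Next I would argue, by induction on $k$, that the iterates stay in $\mathscr{B}_\rho(x_0)$ so that Assumption~\ref{assumption2.1} remains applicable along the trajectory. The base case is Lemma~\ref{lem2.2} applied to $x_{k-1}\in\mathscr{B}_\rho(x_0)$, which gives $x_{k-\frac12}\in\mathscr{B}_\rho(x_0)$ together with $\|x_{k-\frac12}-x^\ast\|\le \|x_{k-1}-x^\ast\|$; combining with the nonexpansive bound above gives $\|x_k-x^\ast\|\le \|x_{k-1}-x^\ast\|\le \|x_0-x^\ast\|\le \rho/2$, so by the triangle inequality $\|x_k-x_0\|\le \rho$, closing the induction.

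With both ingredients in place, I would take conditional expectation on the first $k-1$ iterates. Since the bound $\|x_k-x^\ast\|^2\le \|x_{k-\frac12}-x^\ast\|^2$ is deterministic in $\alpha_k$, we obtain
\begin{equation*}
E_{k-1}\|x_k-x^\ast\|^2 \;\le\; E_{k-1}\|x_{k-\frac12}-x^\ast\|^2.
\end{equation*}
The right-hand side is precisely the quantity bounded in Theorem~\ref{the2.1} (applied to the NSKM update from $x_{k-1}$ to $x_{k-\frac12}$), which yields the contraction factor $1-\frac{(1-2\eta)\sigma_{min}^2(f'(x_{k-1}))}{m(1+\eta)^2\|f'(x_{k-1})\|_F^2}$. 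Taking the full expectation by the tower property and iterating gives the stated mean-squared recursion, and the fact that the contraction factor lies strictly in $(0,1)$ (already verified in the proof of Theorem~\ref{the2.1}) implies convergence in expectation to $x^\ast\in\mathscr{B}_{\rho/2}(x_0)$.

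The only nontrivial point will be the joint use of induction on the containment $x_k\in\mathscr{B}_\rho(x_0)$ together with the contraction: one has to observe that the projection step cannot eject the iterate from the ball because it is nonexpansive toward $x^\ast$, so the radius bound set up in Lemma~\ref{lem2.2} propagates through both the NSKM and the projection half-steps. Once this is in place, the rest is a direct chaining of Theorem~\ref{the2.1} with Lemma~\ref{lem3.1}.
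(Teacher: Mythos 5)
Your proof is correct and follows essentially the same route as the paper: apply the nonexpansiveness relation of Lemma~\ref{lem3.1} with $P_{C_{\alpha_k}}(x^*)=x^*$ to reduce to the NSKM half-step, then invoke Theorem~\ref{the2.1} and the fact that the contraction factor lies in $(0,1)$. The only difference is that you make explicit the induction showing the iterates remain in $\mathscr{B}_\rho(x_0)$ after the projection half-step, a point the paper leaves implicit; this is a welcome (and correct) addition rather than a deviation.
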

\begin{proof}
	By Lemma \ref{lem3.1}, we have that
	$$\|x_{k}-x^*\|^2=\|P_{C_{\alpha_{k}}}(x_{k-\frac{1}{2}})-P_{C_{\alpha_{k}}}(x^*)\|^2\leq\|x_{k-\frac{1}{2}}-x^*\|^2.$$
	Since $x_{k-\frac{1}{2}}$ is generated by the NSKM method, from Theorem \ref{the2.1}, we can get 
	$$E\|x_{k-\frac{1}{2}}-x^*\|^2 \leq(1-\frac{(1-2\eta)\sigma_{min}^2(f'(x_{k-1}))}{m(1+\eta)^2\|f'(x_{k-1})\|_F^2})E\|x_{k-1}-x^*\|^2.$$
	Thus,\begin{align*}
		E\|x_{k}-x^*\|^2 &\leq E\|x_{k-\frac{1}{2}}-x^*\|^2\\ &\leq(1-\frac{(1-2\eta)\sigma_{min}^2(f'(x_{k-1}))}{m(1+\eta)^2\|f'(x_{k-1})\|_F^2})E\|x_{k-1}-x^*\|^2.
	\end{align*}
	By deducing in Theorem \ref{the2.1}, we have
	$$0\textless1-\frac{(1-2\eta)\sigma_{min}^2(f'(x_{k-1}))}{m(1+\eta)^2\|f'(x_{k-1})\|_F^2}\textless 1.$$
	Therefore, the iteration sequence generated by the PSKM method converges to a solution $x^*\in \mathscr{B}_{\frac{\rho}{2}}(x_0)$ of \eqref{equ1} in expectation.
\end{proof}
Similar to the proof of Theorem \ref{theo3.1}, we can get the following two theorems.
\begin{thm}
	Suppose that $f(x)$ satisfies Assumption \ref{assum2.2} and there exists $x^*$ such that $f(x^*)=0$, $x^*\in C$. Then the iteration sequence $\{x_k\}_{k=0}^{\infty}$ generated by the PSKM method converges to a solution $x^*$ of \eqref{equ1}. Moreover, iteration error satisfies 
	\begin{equation*}
		\|x_k-x^*\|^2 \leq\|x_{k-1}-x^*\|^2-\frac{f_{i_k}^2(x_{k-1})}{\|\nabla f_{i_k}(x_{k-1})\|^2}, k=1,2,...
	\end{equation*}
\end{thm}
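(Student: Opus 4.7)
The plan is to mirror the proof of Theorem \ref{theo3.1} exactly, but replace the role of Theorem \ref{the2.1} with Theorem \ref{the2.2}. Let $x_{k-\frac{1}{2}}$ denote the intermediate NSKM iterate produced in lines 2--6 of Algorithm \ref{alg3.1.1}, so that $x_k = P_{C_{\alpha_k}}(x_{k-\frac{1}{2}})$. The overall strategy is a two-step per-iteration descent: the NSKM step makes progress on $f(x)=0$ by Theorem \ref{the2.2}, and the projection step onto $C_{\alpha_k}$ never increases distance to $x^*$ because $x^*\in C \subseteq C_{\alpha_k}$.

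First I would use the nonexpansive property from Lemma \ref{lem3.1}. Since $x^*\in C = \bigcap_{i=1}^{k_c} C_i \subseteq C_{\alpha_k}$, we have $P_{C_{\alpha_k}}(x^*)=x^*$, and therefore
\begin{equation*}
	\|x_k-x^*\|^2 = \|P_{C_{\alpha_k}}(x_{k-\frac{1}{2}}) - P_{C_{\alpha_k}}(x^*)\|^2 \leq \|x_{k-\frac{1}{2}}-x^*\|^2.
\end{equation*}
Next, since $x_{k-\frac{1}{2}}$ is produced from $x_{k-1}$ by exactly one NSKM update, and since Assumption \ref{assum2.2} holds throughout $\mathscr{D}(f)$, Theorem \ref{the2.2} applies step-by-step (its per-iteration inequality does not require a warm start from $x_0$) and yields
\begin{equation*}
	\|x_{k-\frac{1}{2}}-x^*\|^2 \leq \|x_{k-1}-x^*\|^2 - \frac{f_{i_k}^2(x_{k-1})}{\|\nabla f_{i_k}(x_{k-1})\|^2}.
\end{equation*}
Chaining the two displays delivers the claimed error recursion. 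To conclude convergence to $x^*$, I would observe that $\|x_k-x^*\|^2$ is monotonically non-increasing, hence bounded, so the positive terms $f_{i_k}^2(x_{k-1})/\|\nabla f_{i_k}(x_{k-1})\|^2$ are summable and must go to zero; combined with the row-bounded-below property (which prevents $\|\nabla f_{i_k}(x_{k-1})\|$ from blowing up trivially) and the maximal-residual choice of $i_k$, this forces $f(x_{k-1})\to 0$ and thus $x_k\to x^*$.

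The main subtlety, rather than obstacle, is verifying that Theorem \ref{the2.2} really applies at every iteration despite the intervening projection. The point is that Assumption \ref{assum2.2} (convexity of each $f_i$, row-boundedness of $f'(x)$ on all of $\mathscr{D}(f)$, and nonnegativity of $f_i$) is a global hypothesis, not a neighborhood hypothesis, so the single-step inequality of Theorem \ref{the2.2} holds at each $x_{k-1}\in\mathscr{D}(f)$ regardless of the previous projection. One should also confirm that $x_{k-\frac{1}{2}}\in\mathscr{D}(f)$ so that the projection step and the subsequent NSKM step are well-defined; this is implicit in the standing assumption that the iterates remain in the domain, as was already taken for granted in the NSKM analysis.
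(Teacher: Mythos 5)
Your proposal is correct and matches the paper's intended argument: the paper gives no separate proof for this theorem, stating only that it follows ``similar to the proof of Theorem \ref{theo3.1}'' with Theorem \ref{the2.2} supplying the per-step inequality in place of Theorem \ref{the2.1}, which is exactly the decomposition (nonexpansiveness of $P_{C_{\alpha_k}}$ via Lemma \ref{lem3.1}, then the NSKM descent inequality) that you carry out.
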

\begin{thm}\label{thm3.3}
	Assume that $f_i(x):\mathscr{D}(f_i)\to\mathbb{R}$ satisfies \eqref{def2.2.1} in a ball $\mathscr{B}_{\rho}(x_0)$ or is a nonnegative convex function. Let $f'(x)$ is row bounded below for $\forall x\in \mathscr{D}(f)$ and \eqref{equ1} is solvable in $\mathscr{B}_{\frac{\rho}{2}}(x_0)$. Then the iteration sequence $\{x_k\}_{k=0}^{\infty}$ generated by the PSKM method converges to a solution $x^*\in \mathscr{B}_{\frac{\rho}{2}}(x_0)$ of \eqref{equ1}.
\end{thm}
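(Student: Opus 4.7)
The plan is to mimic the structure of the proof of Theorem \ref{theo3.1}, using Corollary \ref{corollary2.2} (which replaces Theorem \ref{the2.1} for the hybrid hypothesis) in place of the linearly convergent NSKM bound. The PSKM update splits into the NSKM half-step producing $x_{k-\frac{1}{2}}$ and a single projection onto a randomly selected convex set $C_{\alpha_k}$, giving $x_k=P_{C_{\alpha_k}}(x_{k-\frac{1}{2}})$. Since any solution $x^*$ of \eqref{equ1} lies in $C=\bigcap_{i=1}^{k_c}C_i$, in particular $x^*\in C_{\alpha_k}$, so $P_{C_{\alpha_k}}(x^*)=x^*$; the nonexpansiveness in Lemma \ref{lem3.1} then yields
\begin{equation*}
\|x_k-x^*\|^2=\|P_{C_{\alpha_k}}(x_{k-\frac{1}{2}})-P_{C_{\alpha_k}}(x^*)\|^2\leq\|x_{k-\frac{1}{2}}-x^*\|^2.
\end{equation*}

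Next I would invoke the hypothesis-splitting argument: if every $f_i$ satisfies \eqref{def2.2.1} in $\mathscr{B}_{\rho}(x_0)$, I apply Lemma \ref{lem2.2} to the NSKM half-step and obtain $\|x_{k-\frac{1}{2}}-x^*\|^2\leq\|x_{k-1}-x^*\|^2-(1-2\eta_{i_k})f_{i_k}^2(x_{k-1})/\|\nabla f_{i_k}(x_{k-1})\|^2$; if instead each $f_i$ is nonnegative convex, I use the one-sided estimate from the proof of Theorem \ref{the2.2} to obtain $\|x_{k-\frac{1}{2}}-x^*\|^2\leq\|x_{k-1}-x^*\|^2-f_{i_k}^2(x_{k-1})/\|\nabla f_{i_k}(x_{k-1})\|^2$. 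In either case the row-bounded-below assumption on $f'(x)$ guarantees the denominator is positive and the update is well-defined. Chaining with the projection inequality above gives a monotone nonincreasing sequence $\{\|x_k-x^*\|\}$.

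The key step is then the induction that keeps the iterates inside the ball where Corollary \ref{corollary2.2}'s conditions hold. Assuming $\|x_{k-1}-x^*\|\leq\rho/2$, the half-step bound above yields $\|x_{k-\frac{1}{2}}-x^*\|\leq\rho/2$, so $\|x_{k-\frac{1}{2}}-x_0\|\leq\rho$ and the NSKM half-step was legitimate; then nonexpansiveness of the projection preserves $\|x_k-x^*\|\leq\rho/2$, closing the induction and in particular giving $x_k\in\mathscr{B}_{\rho}(x_0)\subset\mathscr{D}(f)$. Finally, from the monotone decrease of $\|x_k-x^*\|^2$ and a telescoping sum, $\sum_{k=1}^{\infty}f_{i_k}^2(x_{k-1})/\|\nabla f_{i_k}(x_{k-1})\|^2<\infty$, so $f_{i_k}(x_{k-1})\to 0$; combining with the maximum-residual rule, $\|f(x_{k-1})\|^2\leq m\,\max_i f_i^2(x_{k-1})$ and the row-bounded-below condition force $\|f(x_{k-1})\|\to 0$, whence $x_k\to x^*\in\mathscr{B}_{\rho/2}(x_0)\cap C$.

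The main obstacle I expect is handling the two alternative hypotheses uniformly: in the tangential-cone case the loss factor carries the $(1-2\eta_{i_k})$ term and requires the ball-containment bookkeeping from Lemma \ref{lem2.2}, whereas in the nonnegative-convex case the same contractive inequality falls out of Lemma \ref{lemm2.3.3} without needing any ball restriction. Arguing both at once without duplicating the entire computation—while still producing a single monotonically decreasing sequence whose summability forces $f_{i_k}(x_{k-1})\to 0$—is the delicate bookkeeping, but once it is set up the conclusion follows by exactly the same chain as in Corollary \ref{corollary2.2}, only with the projection nonexpansiveness inserted after each NSKM half-step.
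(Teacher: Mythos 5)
Your overall route is the one the paper intends: the paper's proof of Theorem \ref{thm3.3} is literally ``similar to the proof of Theorem \ref{theo3.1}'', i.e.\ use $x^*\in C\subseteq C_{\alpha_k}$ so that $P_{C_{\alpha_k}}(x^*)=x^*$, apply the nonexpansiveness in Lemma \ref{lem3.1} to get $\|x_k-x^*\|\leq\|x_{k-\frac{1}{2}}-x^*\|$, and then feed the half-step into the NSKM convergence result for the hybrid hypothesis (Corollary \ref{corollary2.2}). Your projection step, the case split via Lemma \ref{lem2.2} and the one-sided estimate from Theorem \ref{the2.2}, and the ball-containment induction are all correct and match this skeleton.

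The genuine gap is in your closing limit argument. The telescoping sum gives $f_{i_k}(x_{k-1})\to 0$ where $i_k=\mathop{argmax}_{i\in\tau_k}\lvert f_i(x_{k-1})\rvert$ is the maximizer over the \emph{random sample} $\tau_k$ of size $\beta$, not over all $m$ indices; your inequality $\|f(x_{k-1})\|^2\leq m\max_i f_i^2(x_{k-1})$ involves the global maximum, and for $\beta<m$ a fixed realization of the samples can avoid the index carrying the largest residual for arbitrarily many consecutive steps, so $\max_{i\in\tau_k}\lvert f_i(x_{k-1})\rvert\to 0$ does not imply $\|f(x_{k-1})\|\to 0$ pathwise. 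Bridging this requires passing to conditional expectations exactly as in \eqref{equation2.1.2}--\eqref{equation2.1.3}, which yield $E_{k-1}\bigl[f_{i_k}^2(x_{k-1})/\|\nabla f_{i_k}(x_{k-1})\|^2\bigr]\geq\|f(x_{k-1})\|^2/(m\|f'(x_{k-1})\|_F^2)$, and then concluding in expectation (which is all the paper itself does). Two smaller points: deducing $f_{i_k}(x_{k-1})\to 0$ from summability of $f_{i_k}^2/\|\nabla f_{i_k}\|^2$ needs an upper bound on $\|\nabla f_{i_k}(x_{k-1})\|$ over the ball, not the row-bounded-below (lower) bound you invoke, which only ensures the update is well defined; and ``$\|f(x_{k-1})\|\to 0$ whence $x_k\to x^*$'' still needs justification (e.g.\ Fej\'er monotonicity of $\|x_k-z\|$ with respect to every solution $z$ together with a cluster-point argument), since without the full-column-rank hypothesis of Assumption \ref{assumption2.1} a vanishing residual does not by itself force convergence of the iterates.
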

\subsection{The APSKM method}
The APSKM method also contains two computational stages. The first stage is similar to that of the PSKM method. However, the second stage is vastly different, which is divided into two cases to get $x_k$. We give the APSKM method in Algorithm \ref{alg3.1}.
\begin{algorithm}
	\caption{The accelerated projected sampling Kaczmarz-Motzkin (APSKM) method }
	\label{alg3.1}
	\begin{algorithmic}[1]
		\Require $f(x)$, $\beta$, $x_0$, $k=1$, $\delta$, number of the closed convex sets $k_c$ and maximum iteration steps $T$
		\Ensure $x_k$
		\While{iteration termination criterion does not hold and $k\leq T$}
		\State Choose a sample of $\beta$ constraints, $\tau_k$, uniformly at random from all entrys of $f(x)$
		\State Compute residual of the selected sample $r_{\tau_k}=-f_{\tau_k}(x_{k-1})$
		\State Set $i_{k}=\mathop{argmax}\limits_{i\in\tau_k}{\mid r_{i}\mid}$
		\State Compute gradient $g_{i_k}=\nabla f_{i_k}(x_{k-1})$
		\State Set $x_{k-\frac{4}{5}}=x_{k-1}+ \frac{r_{i_k}}{\|g_{i_k}\|^2}g_{i_k}^T$
		\State Choose two indicators, $\alpha_1^k$, $\alpha_2^k$, uniformly at random from the set $\{1,2,...,k_c\}$
		\State$x_{k-\frac{3}{5}}=P_{C_{\alpha_1^k}}(x_{k-\frac{4}{5}})$
		\State$x_{k-\frac{2}{5}}=P_{C_{\alpha_2^k}}(x_{k-\frac{3}{5}})$
		\If{$\|x_{k-\frac{2}{5}}-x_{k-\frac{3}{5}}\|_\infty\textless \delta$} 
		\State $x_k=x_{k-\frac{2}{5}}$
		\Else
		\State$x_{k-\frac{1}{5}}=P_{C_{\alpha_1^k}}(x_{k-\frac{2}{5}})$
		\State Calculate $\lambda_k=\frac{\|x_{k-\frac{3}{5}}-x_{k-\frac{2}{5}}\|^2}{\left<x_{k-\frac{3}{5}}-x_{k-\frac{1}{5}}, x_{k-\frac{3}{5}}-x_{k-\frac{2}{5}}\right>}$\\
		\State $x_{k}=x_{k-\frac{3}{5}}+\lambda_k(x_{k-\frac{1}{5}}-x_{k-\frac{3}{5}})$
		\EndIf
		\State k=k+1
		\EndWhile
	\end{algorithmic}
\end{algorithm}
\subsubsection{Convergence analysis of the APSKM method} 
We will provide a lemma to prove the convergence of the APSKM method.
\begin{lem}[\cite{qin2020randomized}]\label{lem3.2}
	Given two closed convex sets $C_{\alpha_1^k}$ and $C_{\alpha_2^k}$, with $C_{\alpha_1^k}\cap C_{\alpha_2^k}\neq \emptyset$. The parameter $\lambda_k$ and the points $x_{k-\frac{3}{5}}, x_{k-\frac{2}{5}}, x_{k-\frac{1}{5}}, x_{k}$ are generated according to Step$14$, Step$8$, Step$9$, Step$13$, Step$16$ of Algorithm \ref{alg3.1}, respectively. Then we have the following property:\\
	for $\forall x \in C_{\alpha_1^k}\cap C_{\alpha_2^k},$
	\begin{align}\label{lemma3.2.1}
		\|x_{k}-x\|^2&\leq\|x_{k-\frac{4}{5}}-x\|^2.
	\end{align}
\end{lem}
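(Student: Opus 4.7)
The plan is to split on the if-test at Step 10 of Algorithm \ref{alg3.1} and to peel off the projection at Step 8. For any $x \in C_{\alpha_1^k}\cap C_{\alpha_2^k} \subset C_{\alpha_1^k}$, Lemma \ref{lem3.1} applied to $x_{k-\frac{3}{5}} = P_{C_{\alpha_1^k}}(x_{k-\frac{4}{5}})$ immediately yields $\|x_{k-\frac{3}{5}} - x\|^2 \leq \|x_{k-\frac{4}{5}} - x\|^2$, so it suffices to prove $\|x_k - x\|^2 \leq \|x_{k-\frac{3}{5}} - x\|^2$ in each branch. In the short-step branch, $x_k = x_{k-\frac{2}{5}} = P_{C_{\alpha_2^k}}(x_{k-\frac{3}{5}})$ and a second application of Lemma \ref{lem3.1} with $x \in C_{\alpha_2^k}$ closes the argument.

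The accelerated branch is the substantive case. Abbreviating $a := x_{k-\frac{3}{5}} - x_{k-\frac{2}{5}}$ and $b := x_{k-\frac{2}{5}} - x_{k-\frac{1}{5}}$, Steps 14 and 16 give $x_k - x_{k-\frac{3}{5}} = -\lambda_k(a+b)$ with $\lambda_k = \|a\|^2 / \langle a, a+b\rangle$. Two applications of the first-order condition in Lemma \ref{lem3.1}, to the projections defining $x_{k-\frac{2}{5}}$ (using $x \in C_{\alpha_2^k}$) and $x_{k-\frac{1}{5}}$ (using $x \in C_{\alpha_1^k}$), produce
\[
\langle a, x_{k-\frac{3}{5}} - x\rangle \geq \|a\|^2, \qquad \langle b, x_{k-\frac{1}{5}} - x\rangle \geq 0.
\]
The same first-order condition for $x_{k-\frac{1}{5}}$, applied instead to the feasible test point $x_{k-\frac{3}{5}} \in C_{\alpha_1^k}$, yields $\|b\| \leq \|a\|$ and hence $\langle a, a+b\rangle \geq \|a\|(\|a\|-\|b\|) \geq 0$; positivity of the denominator holds except in the degenerate case $a+b = 0$, in which $x_k = x_{k-\frac{3}{5}}$ and there is nothing to prove. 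Decomposing $\langle b, x_{k-\frac{3}{5}} - x\rangle = \langle b, a+b\rangle + \langle b, x_{k-\frac{1}{5}} - x\rangle$ and combining the bounds delivers the central estimate $\langle a+b, x_{k-\frac{3}{5}} - x\rangle \geq \langle a, a+b\rangle + \|b\|^2$.

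Expanding $\|x_k - x\|^2 = \|x_{k-\frac{3}{5}} - x\|^2 - 2\lambda_k\langle a+b, x_{k-\frac{3}{5}} - x\rangle + \lambda_k^2\|a+b\|^2$, inserting the central estimate, and clearing the positive denominator reduce the target inequality to the scalar bound
\[
\|a\|^2\|a+b\|^2 \leq 2\langle a, a+b\rangle\bigl(\langle a, a+b\rangle + \|b\|^2\bigr).
\]
Setting $s = \|a\|^2$, $u = \|b\|^2$, $t = \langle a, b\rangle$ rewrites this as $2t^2 + 2(s+u)t + s(s+u) \geq 0$, a quadratic in $t$ whose discriminant $4(s+u)(u-s)$ is nonpositive precisely because $\|b\| \leq \|a\|$, so the inequality holds for every real $t$. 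The main obstacle I expect is this closing algebraic step: carrying out the expansion cleanly and recognizing that the projection-derived inequality $\|b\| \leq \|a\|$ is exactly the condition that signs the discriminant; the geometric content of the lemma is otherwise just two applications of the first-order optimality characterization of projections.
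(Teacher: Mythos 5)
Your proof is correct, and it reaches the conclusion by a genuinely different route from the paper's. The paper first proves $\lambda_k\geq 1$ and the orthogonality relation $\langle x_k-x_{k-\frac{2}{5}},\,x_{k-\frac{3}{5}}-x_{k-\frac{2}{5}}\rangle=0$, then expands $\|x_k-x\|^2$ around $x_{k-\frac{1}{5}}$, shows the cross term $2(1-\lambda_k)\langle x_{k-\frac{3}{5}}-x_{k-\frac{1}{5}},x_k-x\rangle$ is nonpositive, and obtains the stronger intermediate bound $\|x_k-x\|^2\leq\|x_{k-\frac{1}{5}}-x\|^2-\|x_k-x_{k-\frac{1}{5}}\|^2$ before chaining down to $x_{k-\frac{4}{5}}$ by nonexpansiveness of the three projections. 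You instead expand around $x_{k-\frac{3}{5}}$, merge the two variational inequalities into the single estimate $\langle a+b,x_{k-\frac{3}{5}}-x\rangle\geq\langle a,a+b\rangle+\|b\|^2$, and reduce the claim to the quadratic $2t^2+2(s+u)t+s(s+u)\geq 0$ whose discriminant $4(s+u)(u-s)$ is signed by $\|b\|\leq\|a\|$; that inequality and the paper's $\lambda_k\geq 1$ are two faces of the same Fej\'er relation $\|a+b\|^2\leq\|a\|^2-\|b\|^2$. Your route dispenses with the orthogonality lemma entirely, isolates the one projection inequality that actually drives the result, and (unlike the paper) explicitly flags the degenerate case $\langle a,a+b\rangle=0$ in which $\lambda_k$ is undefined; what it gives up is the geometric picture behind the acceleration and the extra decrement $-\|x_k-x_{k-\frac{1}{5}}\|^2$ that the paper's argument yields for free. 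Your treatment of the short-step branch $x_k=x_{k-\frac{2}{5}}$ is harmless but not needed: the lemma as stated concerns only the extrapolated update of Step 16, and that branch is handled directly by Lemma \ref{lem3.1} in the proof of Theorem \ref{the3.3}.
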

\begin{proof}
	See Appendix.
\end{proof}
\begin{thm}\label{the3.3}
	Suppose that $f(x)$ satisfies Assumption \ref{assumption2.1} and \eqref{equ1} is solvable in $\mathscr{B}_{\frac{\rho}{2}}(x_0)$. Then the iteration sequence $\{x_k\}_{k=0}^{\infty}$ generated by the APSKM method converges to a solution $x^*\in \mathscr{B}_{\frac{\rho}{2}}(x_0)$ of \eqref{equ1} in expectation. Moreover, the mean squared iteration error satisfies 
	\begin{equation*}
		E\|x_k-x^*\|^2 \leq(1-\frac{(1-2\eta)\sigma_{min}^2(f'(x_{k-1}))}{m(1+\eta)^2\|f'(x_{k-1})\|_F^2})E\|x_{k-1}-x^*\|^2, k=1,2,... 
	\end{equation*}
	where $\eta=\mathop{max}\limits_i{\eta_i}\textless \frac{1}{2}\quad(i=1,2,...,m).$
\end{thm}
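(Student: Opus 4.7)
The plan is to reduce the APSKM convergence to the NSKM convergence established in Theorem \ref{the2.1} by combining it with the nonexpansiveness-type bound of Lemma \ref{lem3.2}. The key structural observation is that the APSKM update decomposes cleanly into two stages: the first stage produces $x_{k-\frac{4}{5}}$ via one NSKM step from $x_{k-1}$, and the second stage produces $x_k$ from $x_{k-\frac{4}{5}}$ using only projections (and, in the acceleration branch, a convex combination analyzed by Lemma \ref{lem3.2}).

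First, I would observe that since $x^\ast\in C=\bigcap_{i=1}^{k_c}C_i$ is a solution of \eqref{equ1}, we have $x^\ast\in C_{\alpha_1^k}\cap C_{\alpha_2^k}$ for any random indices $\alpha_1^k,\alpha_2^k\in\{1,\dots,k_c\}$. In both branches of the \texttt{if}-statement of Algorithm \ref{alg3.1}, Lemma \ref{lem3.2} then directly applies with $x=x^\ast$, yielding the deterministic inequality
\begin{equation*}
\|x_{k}-x^\ast\|^2\ \leq\ \|x_{k-\frac{4}{5}}-x^\ast\|^2.
\end{equation*}
(For the non-acceleration branch $x_k=x_{k-\frac{2}{5}}$, this still follows from two successive applications of the nonexpansive projection property in Lemma \ref{lem3.1}, which is the same mechanism used inside the proof of Lemma \ref{lem3.2}.)

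Second, I would note that $x_{k-\frac{4}{5}}$ is exactly a single NSKM iterate starting from $x_{k-1}$. Therefore, conditional on the first $k-1$ iterations, the analysis leading to Theorem \ref{the2.1} gives
\begin{equation*}
E_{k-1}\|x_{k-\frac{4}{5}}-x^\ast\|^2\ \leq\ \Bigl(1-\tfrac{(1-2\eta)\sigma_{min}^2(f'(x_{k-1}))}{m(1+\eta)^2\|f'(x_{k-1})\|_F^2}\Bigr)\|x_{k-1}-x^\ast\|^2.
\end{equation*}
Chaining this with the previous inequality, taking conditional expectation $E_{k-1}$ through the (deterministic) projection step, and then taking full expectation via the tower property $E[E_{k-1}[\cdot]]=E[\cdot]$, gives the claimed contraction and, since the contraction factor lies strictly in $(0,1)$ as shown in Theorem \ref{the2.1}, the convergence in expectation.

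The main obstacle I anticipate is the containment issue: Theorem \ref{the2.1} (and, underneath it, Lemma \ref{lem2.2}) requires that $x_{k-1}\in\mathscr{B}_\rho(x_0)$ so that the local tangential cone condition can be invoked at the NSKM step. I would handle this by an induction parallel to the one in Lemma \ref{lem2.2}: assuming $x_{k-1}\in\mathscr{B}_\rho(x_0)$, the NSKM step ensures $\|x_{k-\frac{4}{5}}-x^\ast\|\leq\|x_{k-1}-x^\ast\|$, and then the projection/acceleration step preserves this bound by Lemma \ref{lem3.2}, so $\|x_k-x^\ast\|\leq\|x_{k-1}-x^\ast\|\leq\rho/2$, and by the triangle inequality $\|x_k-x_0\|\leq\|x_k-x^\ast\|+\|x^\ast-x_0\|\leq\rho$. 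This closes the induction and allows the contraction estimate to be iterated for all $k$, giving the stated rate.
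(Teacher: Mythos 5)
Your proposal is correct and follows essentially the same route as the paper's proof: bound $\|x_k-x^*\|^2$ by $\|x_{k-\frac{4}{5}}-x^*\|^2$ using Lemma \ref{lem3.1} in the non-acceleration branch and Lemma \ref{lem3.2} in the acceleration branch, then invoke Theorem \ref{the2.1} for the NSKM half-step and chain the two estimates. Your additional induction argument keeping the iterates inside $\mathscr{B}_\rho(x_0)$ is a welcome extra care that the paper leaves implicit, but it does not change the substance of the argument.
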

\begin{proof}
	If $\|x_{k-\frac{2}{5}}-x_{k-\frac{3}{5}}\|_\infty\textless \delta$, by Lemma \ref{lem3.1}, we have
	$$\|x_{k}-x^*\|^2\leq\|x_{k-\frac{4}{5}}-x^*\|^2.$$
	If $x_k$ is obtained by Step$16$ of Algorithm \ref{alg3.1}, from Lemma \ref{lem3.2}, we obtain that
	$$\|x_{k}-x^*\|^2\leq\|x_{k-\frac{4}{5}}-x^*\|^2.$$
	Since $x_{k-\frac{4}{5}}$ is obtained by one step update of the NSKM method, from Theorem \ref{the2.1}, we can get
	$$E\|x_{k-\frac{4}{5}}-x^*\|^2 \leq(1-\frac{(1-2\eta)\sigma_{min}^2(f'(x_{k-1}))}{m(1+\eta)^2\|f'(x_{k-1})\|_F^2})E\|x_{k-1}-x^*\|^2.$$
	Thus,
	$$E\|x_k-x^*\|^2\leq E\|x_{k-\frac{4}{5}}-x^*\|^2 \leq(1-\frac{(1-2\eta)\sigma_{min}^2(f'(x_{k-1}))}{m(1+\eta)^2\|f'(x_{k-1})\|_F^2})E\|x_{k-1}-x^*\|^2.$$
	By the derivation of Theorem \ref{the2.1}, we have that $$0\textless1-\frac{(1-2\eta)\sigma_{min}^2(f'(x_{k-1}))}{m(1+\eta)^2\|f'(x_{k-1})\|_F^2}\textless 1.$$
	Thus, the iteration sequence $\{x_k\}_{k=0}^{\infty}$ generated by the APSKM method converges to a solution $x^*\in \mathscr{B}_{\frac{\rho}{2}}(x_0)$ of \eqref{equ1} in expectation.
\end{proof}
Similar to the proof of Theorem \ref{the3.3}, we can get the following two theorems.
\begin{thm}
	Suppose that $f(x)$ satisfies Assumption \ref{assum2.2} and there exists $x^*$ such that $f(x^*)=0$, $x^*\in C$. Then the iteration sequence $\{x_k\}_{k=0}^{\infty}$ generated by the APSKM method converges to a solution $x^*$ of \eqref{equ1}. Moreover, iteration error satisfies 
	\begin{equation*}
		\|x_k-x^*\|^2 \leq\|x_{k-1}-x^*\|^2-\frac{f_{i_k}^2(x_{k-1})}{\|\nabla f_{i_k}(x_{k-1})\|^2}, k=1,2,...
	\end{equation*}
\end{thm}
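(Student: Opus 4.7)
The plan is to mirror the argument used for Theorem \ref{the3.3}, but to replace the appeal to Theorem \ref{the2.1} (which gives the expected linear contraction under Assumption \ref{assumption2.1}) with an appeal to Theorem \ref{the2.2} (which gives the deterministic monotone descent under Assumption \ref{assum2.2}). The point $x_{k-\frac{4}{5}}$ produced in Step 6 of Algorithm \ref{alg3.1} is exactly one NSKM update applied to $x_{k-1}$, so Theorem \ref{the2.2} directly yields
\[
\|x_{k-\frac{4}{5}}-x^*\|^2\leq\|x_{k-1}-x^*\|^2-\frac{f_{i_k}^2(x_{k-1})}{\|\nabla f_{i_k}(x_{k-1})\|^2}.
\]

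Next I would split the analysis of the projection/acceleration stage into the two branches of Algorithm \ref{alg3.1}. In the ``if'' branch (where $\|x_{k-\frac{2}{5}}-x_{k-\frac{3}{5}}\|_\infty<\delta$), we have $x_k=x_{k-\frac{2}{5}}=P_{C_{\alpha_2^k}}(P_{C_{\alpha_1^k}}(x_{k-\frac{4}{5}}))$, and since $x^*\in C\subseteq C_{\alpha_1^k}\cap C_{\alpha_2^k}$, the nonexpansiveness from Lemma \ref{lem3.1} applied twice gives $\|x_k-x^*\|^2\leq\|x_{k-\frac{4}{5}}-x^*\|^2$. In the ``else'' branch, $x_k$ is constructed by the accelerated combination of Steps 13--16, and Lemma \ref{lem3.2} delivers precisely the same bound $\|x_k-x^*\|^2\leq\|x_{k-\frac{4}{5}}-x^*\|^2$, provided that $x^*\in C_{\alpha_1^k}\cap C_{\alpha_2^k}$, which again follows from $x^*\in C$.

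Combining the two bounds chains together cleanly: in either branch,
\[
\|x_k-x^*\|^2\leq\|x_{k-\frac{4}{5}}-x^*\|^2\leq\|x_{k-1}-x^*\|^2-\frac{f_{i_k}^2(x_{k-1})}{\|\nabla f_{i_k}(x_{k-1})\|^2},
\]
which is the advertised per-iteration inequality. Summing this estimate over $k$ shows that $\sum_{k\geq 1} f_{i_k}^2(x_{k-1})/\|\nabla f_{i_k}(x_{k-1})\|^2<\infty$ and that $\{\|x_k-x^*\|\}$ is monotonically non-increasing, hence convergent; combined with the row-bounded-below hypothesis (ii) of Assumption \ref{assum2.2} and the usual argument of Theorem \ref{the2.2}, this yields convergence of $\{x_k\}$ to $x^*$.

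The only non-routine point is verifying that Lemma \ref{lem3.2} is legitimately applicable here: it requires $x^*\in C_{\alpha_1^k}\cap C_{\alpha_2^k}$, which is immediate because $x^*\in C=\bigcap_{i=1}^{k_c}C_i$. Once this is noted, the argument is essentially a copy-and-paste of the proof of Theorem \ref{the3.3} with the contraction factor replaced by the deterministic decrement from Theorem \ref{the2.2}; no expectations are needed since Assumption \ref{assum2.2} gives a sure (not merely expected) descent at every step.
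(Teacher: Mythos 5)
Your proposal is correct and matches the paper's intended argument: the paper gives no separate proof for this theorem, stating only that it follows ``similar to the proof of Theorem \ref{the3.3},'' and your write-up is exactly that adaptation—replacing the appeal to Theorem \ref{the2.1} with the deterministic descent of Theorem \ref{the2.2}, handling the two branches via Lemma \ref{lem3.1} and Lemma \ref{lem3.2}, and correctly noting that $x^*\in C\subseteq C_{\alpha_1^k}\cap C_{\alpha_2^k}$ makes those lemmas applicable. Your closing remark on summability and monotonicity is at the same level of rigor as the paper's own treatment of convergence in Theorem \ref{the2.2}.
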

\begin{thm}
	Assume that $f_i(x):\mathscr{D}(f_i)\to\mathbb{R}$  satisfies \eqref{def2.2.1} in a ball $\mathscr{B}_{\rho}(x_0)$ or is a nonnegative convex function. Let $f'(x)$ is row bounded below for $\forall x\in \mathscr{D}(f)$ and \eqref{equ1} is solvable in $\mathscr{B}_{\frac{\rho}{2}}(x_0)$. Then the iteration sequence $\{x_k\}_{k=0}^{\infty}$ generated by the APSKM method converges to a solution $x^*\in \mathscr{B}_{\frac{\rho}{2}}(x_0)$ of \eqref{equ1}.
\end{thm}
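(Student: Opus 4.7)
The plan is to mimic the structure of the proof of Theorem \ref{the3.3}, replacing the use of Theorem \ref{the2.1} with the analogous NSKM convergence result established in Corollary \ref{corollary2.2} (covering both the tangential cone case and the nonnegative convex case simultaneously, as indicated in Remark \ref{remark2.2}).

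First I would argue that, at every iteration, the APSKM update is nonexpansive with respect to any point $x^* \in C$ with $f(x^*)=0$. There are two branches inside the algorithm. If $\|x_{k-2/5}-x_{k-3/5}\|_\infty < \delta$, then $x_k = x_{k-2/5}$ is obtained from $x_{k-4/5}$ by two successive metric projections onto $C_{\alpha_1^k}$ and $C_{\alpha_2^k}$; since $x^* \in C \subseteq C_{\alpha_1^k} \cap C_{\alpha_2^k}$, applying Lemma \ref{lem3.1} twice yields
\begin{equation*}
\|x_k-x^*\|^2 \le \|x_{k-3/5}-x^*\|^2 \le \|x_{k-4/5}-x^*\|^2.
\end{equation*}
If instead $x_k$ is computed via the accelerated Step~16, then Lemma \ref{lem3.2} applied with $x=x^*$ gives exactly $\|x_k-x^*\|^2 \le \|x_{k-4/5}-x^*\|^2$. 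In either branch we therefore obtain the uniform bound $\|x_k-x^*\|^2 \le \|x_{k-4/5}-x^*\|^2$.

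Next, $x_{k-4/5}$ is produced from $x_{k-1}$ by exactly one NSKM step. Under the stated hypotheses—each $f_i$ either satisfies \eqref{def2.2.1} in $\mathscr{B}_\rho(x_0)$ or is a nonnegative convex function, with $f'(x)$ row bounded below and a solution $x^*\in\mathscr{B}_{\rho/2}(x_0)$—Corollary \ref{corollary2.2} guarantees that the NSKM iterates remain in $\mathscr{B}_\rho(x_0) \subset \mathscr{D}(f)$ and converge to such an $x^*$, in particular giving a monotone contraction estimate of the form $\|x_{k-4/5}-x^*\| \le \|x_{k-1}-x^*\|$. Chaining with the previous paragraph yields $\|x_k-x^*\| \le \|x_{k-1}-x^*\|$, so the APSKM iterates likewise stay inside $\mathscr{B}_\rho(x_0)$ and all tools invoked above (projection nonexpansiveness, NSKM per-step bound) remain applicable for all $k$. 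Passing to the limit via Corollary \ref{corollary2.2} then delivers convergence of $\{x_k\}$ to a solution $x^* \in \mathscr{B}_{\rho/2}(x_0)$ of \eqref{equ1}.

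The main obstacle is simply the bookkeeping needed to justify that the NSKM-based estimates continue to hold when the iterate being updated is no longer the raw NSKM sequence but the APSKM sequence, which has been perturbed by projections. This is resolved by the nonexpansiveness argument above, which keeps the APSKM iterate at least as close to $x^*$ as its intermediate NSKM step and hence inside the ball where Definition \ref{definition2} is available. A minor subtlety is that the two cases of the hypothesis (tangential cone vs.\ nonnegative convex) must both be invoked through Corollary \ref{corollary2.2}, but no new analysis is required beyond citing that corollary.
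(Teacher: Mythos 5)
Your proposal is correct and follows essentially the same route as the paper, which states this result with the remark ``Similar to the proof of Theorem \ref{the3.3}'': you combine the nonexpansiveness of the projection steps (Lemma \ref{lem3.1} for the first branch, Lemma \ref{lem3.2} for the accelerated branch) with the one-step NSKM estimate, substituting Corollary \ref{corollary2.2} for Theorem \ref{the2.1}. Your version is in fact slightly more explicit than the paper's, since you spell out the two branches and the bookkeeping that keeps the iterates in $\mathscr{B}_{\rho}(x_0)$.
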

\section{Numerical experiments}
\label{section4}
In this section, firstly, on some given large-scale nonlinear equations, we run the NSKM method while varying the sample size, $\beta$ and compare the NSKM method with the NRK method. Secondly, we investigate the performances of the PSKM and APSKM methods and compare them with the PSGD method by testing some large-scale constrianed nonlinear equations. In our implementations, the stopping criterion is  
$$RSE=\frac{\|x_k-x^*\|^2}{\|x^*\|^2}\leq \varepsilon,$$
or the maximum iteration steps 500000 being reached. If the number of iteration steps exceeds 500000, it is denoted as "-". IT and CPU denote the number of iteration steps and CPU times (in seconds), respectively. IT and CPU are the medians of the required iterations steps and the elapsed CPU times with respect to 10 times repeated runs of the corresponding methods. All experiments are carried out using MATLAB (version R2020b) on a laptop with 2.20-GHZ intel Core i7-10870H processor, 16 GB memory, and Windows 10 operating system.
\subsection{Experiments on some large-scale nonlinear equations} 
\textbf{Problem 1} The nonlinear equation is taken as $$f_i(x)=(e^{x_i-1}-1)^2,i=1,2,...,m.$$ Obviously, the solution of the problem is $x^*=ones(n,1)$ and the problem satisfies (i), (iii) of Assumption \ref{assum2.2}. For (ii), it is possible that there exists $1\leq j\leq m$ such that $\|\nabla f_j(x_k)\|^2=0$, but if the iteration don't terminate, from the construction of Algorithm \ref{alg2.1}, we can known that the probability of $\|\nabla f_j(x_k)\|=0$ is very small when $\beta$ has the right size. Similarly, the NRK method can also be used to solve the problem. However, the nonlinear uniformly randomized (NURK) method \cite{wang2022nonlinear} don't guarantee to solve the problem successfully. In our experiments, we set $m=5000$, the initial value $x_0=0.5*ones(n,1)$. 

\textbf{Problem 2} The problem can be viewed as a modification of Chained Powell singlar function in \cite{lukvsan2018problems}. 
\begin{align*}
	f_k(x)&=x_i+10x_{i+1}-11, \quad mod(k,4)=1,\\
	f_k(x)&=\sqrt{5}(x_{i+2}-x_{i+3}),\quad mod(k,4)=2,\\
	f_k(x)&=(x_{i+1}-2x_{i+2}+1)^2,\quad mod(k,3)=3,\\
	f_k(x)&=\sqrt{10}(x_i-x_{i+3})^2,\quad mod(k,4)=0,\\
	m&=2(n-2),\quad i=2div(k+3,4)-1,
\end{align*} 
where div$(\cdot)$ is the division operation. The function satisfies conditions of Corollary \ref{corollary2.2} and for (ii) of Assumption \ref{assum2.2}, as stated in Problem 1. Thus, the NSKM method and the NRK method can be utilized to solve the problem. The problem has a solution $x^*=ones(n,1)$. In our work, we set $n=5000$, $m=2(n-2)=9996$, the initial value $x_0=0.5*ones(n,1)$.

The numerical results for Problem 1 and Problem 2 are shown in Fig. \ref{fig1}. Note that when $\beta=1$ the NSKM method is the NURK method. From Fig. \ref{fig1}, we can find that the curves of the NSKM method with $\beta=10,50,100$ are decreasing much more quickly than that of the other methods with respect to the increase of CPU times, which illustrates that the NSKM method with the sample of the right size performs better than the NRK and NURK methods in terms of CPU times.
\begin{figure*}
	\includegraphics[width=0.5\textwidth]{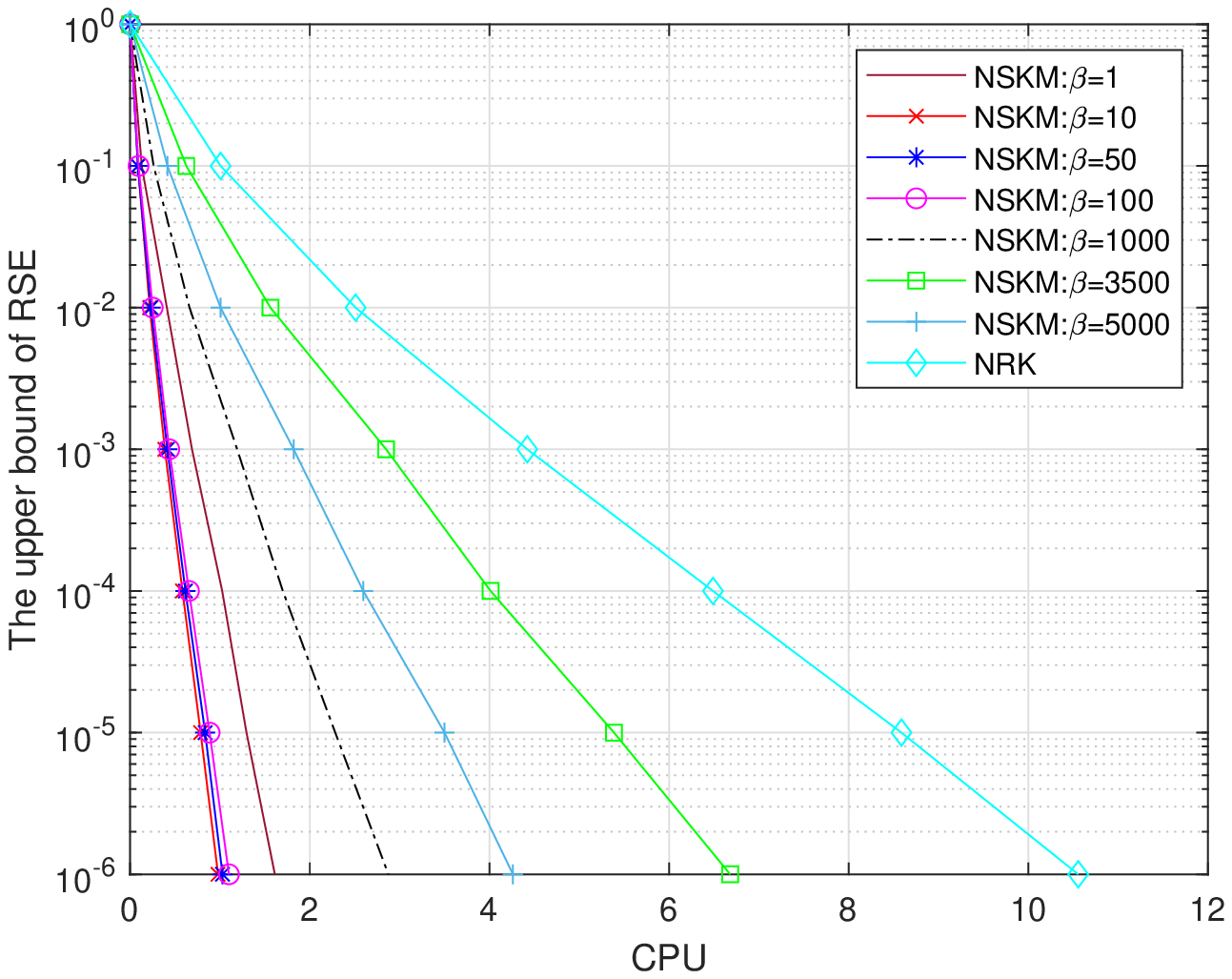}
	\includegraphics[width=0.5\textwidth]{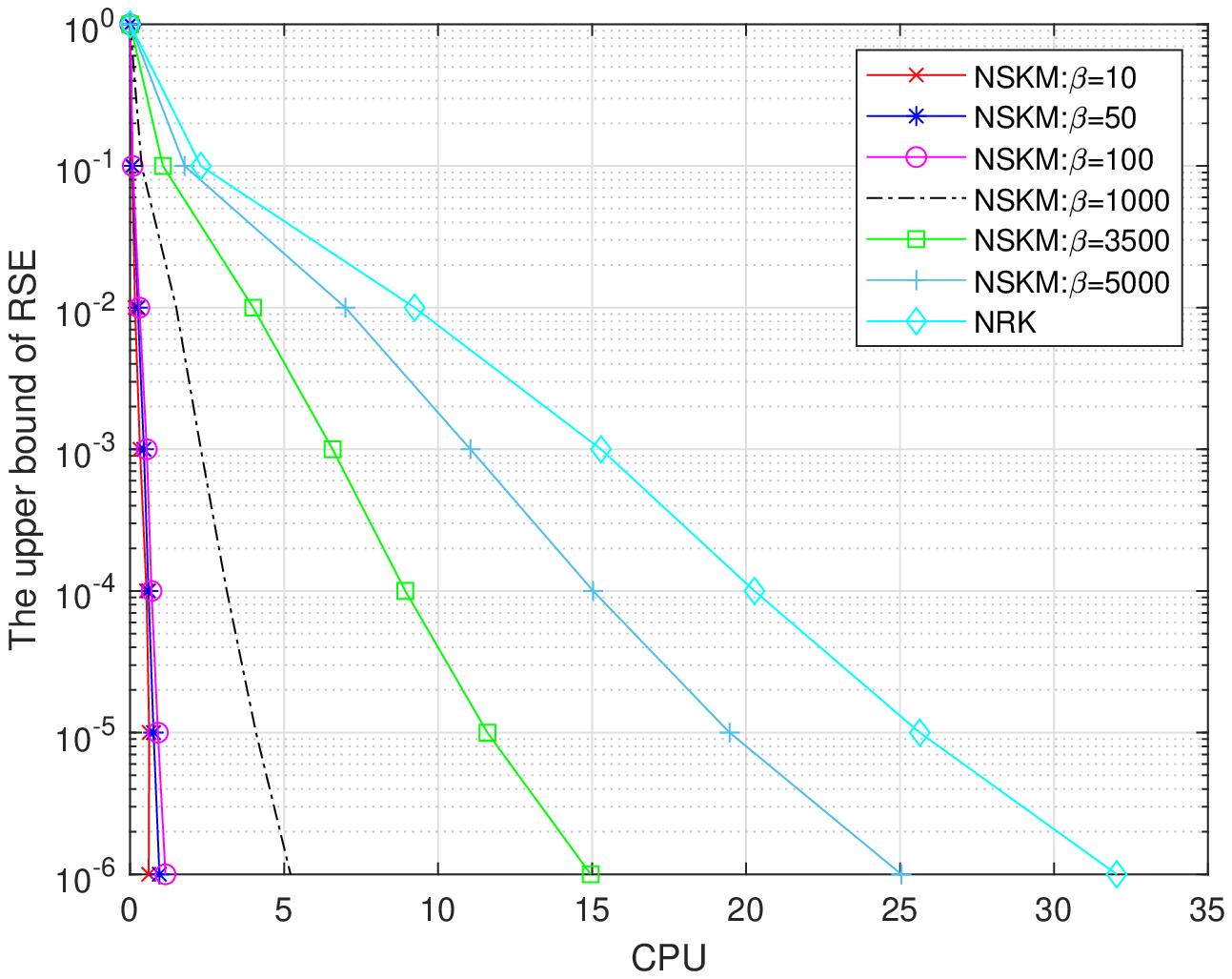}
	\caption{Pictures of the upper bound of the RSE versus CPU for the NRK method and the NSKM method with different sample for Problem 1 (left) and Problem 2 (right)}
	\label{fig1} 
\end{figure*}  
\subsection{Experiments on some large-scale nonlinear equations with finite convex constraints}
To examine the convergence of the PSKM and APSKM methods, we use Problem 1 and Problem 2 with $C=\{x:Ax = b\}$ or $\{x:Ax \leq b\}$. The projector $P_{C_i}$ onto the nonempty closed set $C_i$ can be defined as follows.
\begin{itemize}
	\item[1)]  When $C=\{x:Ax \leq b\}$, in \cite{bauschke1996projection}, $P_{C_i}x=x-\frac{(\langle a_i,x\rangle-b_i)^+}{\|a_i\|^2}a_i^T$, where $(\langle a_i,x\rangle-b_i)^+=max\{\langle a_i,x\rangle-b_i,0\}$.
	\item[2)] When $C=\{x:Ax = b\}$, $P_{C_i}x=x+\frac{b_i-\langle a_i,x\rangle}{\|a_i\|^2}a_i^T.$
\end{itemize}
Thus, the constrained nonlinear equations are determined.

In all simulations, we choose $\beta=50$, the initial guess $x_0=0.5*ones(n,1)$ and $\delta=10^{-10}$. In the PSGD algorithm, we
select a fixed step size, which is the best experimental result by trial and error. In the first simulation, $A$ is randomly generated from the Guassian distribution, $b=Ax^*$ for equality constraints and $b=Ax^*+abs(\delta1)$ for inequality constraints where $\delta1$ is generated by the MATLAB function \textbf{randn}. The results are shown in Table \ref{table1} and Table \ref{table2}, from which, we can clearly observe that the PSKM method requires less computing time than the PSGD and APSKM methods. In the second simulation, we consider $C=\{x:\langle A, x\rangle = b\}$ with coefficient matrix $A\in \mathbb{R}^{k_c \times n}$ on [$\xi$,1], which is generated from the MATLAB function \textbf{rand}. Note that the closer $\xi$ tends to 1, the stronger the correlation of $A$ is. Table \ref{table3} and Table \ref{table4} show that the APSKM method has a significantly better performance than other methods. Moreover, iteration counts and CPU times are decreasing with respect to the increase of $\xi$, which reveals that the stronger the correlation of $A$ is, the better performence the APSKM method has.
\begin{table}[!htbp]
	\centering\small
	\caption{IT and CPU of the PSGD, PSKM and APSKM methods for Problem 1 with $C=\{x:\langle A, x\rangle \leq b\}$ and $\varepsilon=10^{-3}$}
	\label{table1}
	\scalebox{0.83}{
	\begin{tabular}{cccccccc}
		\toprule  % 第一行粗线
		\multicolumn{3}{c}{ $m$ }   &3000&5000 &7000&9000\\
		\hline
		\multirow{6}*{$k_c=300$}&\multirow{2}*{PSGD}  & IT  &101567&98220&125724&220820\\
		& & CPU       &5.2485 &6.6294 &10.9787 &25.8523 \\
		\cline{2-7}
		&\multirow{2}*{PSKM}  & IT  &\textbf{8832}&\textbf{15438}&22055&28512\\
		& & CPU       &\textbf{0.5189} &\textbf{1.2335} &\textbf{2.1537} &\textbf{3.6497} \\
		\cline{2-7}
		&\multirow{2}*{APSKM}  & IT         &8855&15459&\textbf{22024}&\textbf{28471}\\
		& &CPU        &0.9579 &2.1683 &4.1890 &6.5978 \\ 
		\midrule
		\multirow{6}*{$k_c=500$}&\multirow{2}*{PSGD}  & IT  &67795&195818&162349&198686\\
		& & CPU       &4.6462 &16.4433 &18.5343 &29.8744 \\
		\cline{2-7}
		&\multirow{2}*{PSKM}  & IT  &\textbf{8765}&\textbf{15042}&\textbf{21599}&28159\\
		& & CPU       &\textbf{0.6111} &\textbf{1.5549} &\textbf{2.8039} &\textbf{4.6362} \\
		\cline{2-7}
		&\multirow{2}*{APSKM}  & IT         &8882&15092&21604&\textbf{28144}\\ 
		& &CPU        &1.2722 &2.5799 &4.5439 &8.7853 \\
		\midrule
		\multirow{6}*{$k_c=1000$}&\multirow{2}*{PSGD}  & IT  &87630&118458&238101&243719\\
		& & CPU       &7.6124 &11.2738 &24.3570 &30.5528 \\
		\cline{2-7}
		&\multirow{2}*{PSKM}  & IT  &\textbf{7841}&\textbf{14140}&\textbf{20611}&\textbf{27055}\\
		& & CPU       &\textbf{0.7259} &\textbf{1.4688} &\textbf{2.3765} &\textbf{3.6959} \\
		\cline{2-7}
		&\multirow{2}*{APSKM}  & IT         &7942&14338&20811&27165\\ 
		& &CPU        &1.4613 &3.0045 &4.7686 &7.0546 \\
		\bottomrule %最后一行粗线
	\end{tabular}}
\end{table}
\begin{table}[!htbp]
	\centering\small
	\caption{IT and CPU of the PSGD, PSKM and APSKM methods for Problem 2 with $C=\{x:\langle A, x\rangle = b\}$ and $\varepsilon=10^{-3}$}
	\label{table2}
	\scalebox{0.83}{
	\begin{tabular}{cccccccc}
		\toprule  % 第一行粗线
		\multicolumn{3}{c}{ $m$ }   &3000&5000 &7000&9000\\
		\hline
		\multirow{6}*{$k_c=300$}&\multirow{2}*{PSGD}  & IT  &37830&71127&96602&130768\\
		& & CPU       &0.8371 &2.9615 &4.6478 &7.5615 \\
		\cline{2-7}
		&\multirow{2}*{PSKM}  & IT  &\textbf{4545}&\textbf{7829}&\textbf{11179}&\textbf{14186}\\
		& & CPU       &\textbf{0.1401 }&\textbf{0.3957} &\textbf{0.6233} &\textbf{1.0056} \\
		\cline{2-7}
		&\multirow{2}*{APSKM}  & IT         &4580&7971&11343&14279\\
		& &CPU        &0.2518 &0.8005 &1.3630 &2.1831 \\
		\midrule
		\multirow{6}*{$k_c=500$}&\multirow{2}*{PSGD}  & IT  &33483&65978&90915&119675\\
		& & CPU       &0.9642 &3.4890 &5.1170 &8.8733 \\
		\cline{2-7}
		&\multirow{2}*{PSKM}  & IT  &\textbf{4066}&\textbf{7277}&\textbf{11009}&\textbf{13969}\\
		& & CPU       &\textbf{0.1459} &\textbf{0.4430 }&\textbf{0.7782} &\textbf{1.2064} \\
		\cline{2-7}
		&\multirow{2}*{APSKM}  & IT         &4096&7435&11151&14089\\
		& &CPU        &0.2586 &1.0215 &1.8487 &2.6214 \\
		\midrule
		\multirow{6}*{$k_c=1000$}&\multirow{2}*{PSGD}  & IT  &19537&52287&80825&90823\\
		& & CPU       &0.6997 &3.3525 &4.7413 &6.3426 \\
		\cline{2-7}
		&\multirow{2}*{PSKM}  & IT  &3671&6614&9811&\textbf{13178}\\
		& & CPU       &\textbf{0.1673} &\textbf{0.4941 }&\textbf{0.6794} &\textbf{1.1053} \\
		\cline{2-7}
		&\multirow{2}*{APSKM}  & IT         &\textbf{3295}&\textbf{6410}&\textbf{9781}&13391\\
		& &CPU        &0.3137 &1.0498 &1.4678 &2.4749 \\
		\bottomrule %最后一行粗线
	\end{tabular}}
\end{table}
\begin{table}[!htbp]
	\centering\small
	\caption{IT and CPU of the PSGD, PSKM and APSKM methods for Problem 1 with $C=\{x:\langle A, x\rangle = b\}$, $m=5000$ and $\varepsilon=10^{-4}$}
	\label{table3}
	\scalebox{0.83}{
	\begin{tabular}{cccccccc}
		\toprule  % 第一行粗线
		\multicolumn{3}{c}{ $\xi$ }  &0.1&0.3&0.5&0.7&0.9\\
		\hline
		\multirow{6}*{$k_c=300$}&\multirow{2}*{PSGD}  & IT  &-&-&27253&46620&112430\\
		& & CPU       &- &- &1.6779 &2.8708 &6.8706 \\
		\cline{2-8}
		&\multirow{2}*{PSKM}  & IT  &6390&6201&7512&7494&1860\\
		& & CPU       &\textbf{0.8447} &2.1523 &2.6123 &2.5642 &0.6620 \\
		\cline{2-8}
		&\multirow{2}*{APSKM}  & IT         &\textbf{3107}&\textbf{2029}&\textbf{1287}&\textbf{693}&\textbf{117}\\
		& &CPU        &1.0001 &\textbf{1.4160} &\textbf{0.9176} &\textbf{0.5032} &\textbf{0.1019} \\
		\midrule
		\multirow{6}*{$k_c=500$}&\multirow{2}*{PSGD}  & IT  &-&-&40275&78502&169179\\
		& & CPU       &- &- &2.7559 &5.3622 &11.4265 \\
		\cline{2-8}
		&\multirow{2}*{PSKM}  & IT  &6966&8419&9328&8231&1862\\
		& & CPU       &\textbf{2.4967 }&2.9682 &0.7258 &0.6372 &0.1514 \\
		\cline{2-8}
		&\multirow{2}*{APSKM}  & IT         &\textbf{3488}&\textbf{3014}&\textbf{1784}&\textbf{1215}&\textbf{124}\\
		& &CPU        &2.5679 &\textbf{2.2439} &\textbf{0.3219} &\textbf{0.2228 }&\textbf{0.0352} \\
		\midrule
		\multirow{6}*{$k_c=1000$}&\multirow{2}*{PSGD}  & IT  &-&-&79079&142208&274670\\
		& & CPU       &- &- &6.1641 &10.7412 &20.9303 \\
		\cline{2-8}
		&\multirow{2}*{PSKM}  & IT  &9208&9286&10695&8724&1894\\
		& & CPU       &\textbf{0.7803} &0.7830 &0.9003 &2.3935 &0.1826 \\
		\cline{2-8}
		&\multirow{2}*{APSKM}  & IT         &\textbf{4675}&\textbf{3710}&\textbf{3245}&\textbf{1637}&\textbf{137}\\
		& &CPU        &0.9159 &\textbf{0.7305} &\textbf{0.6383} &\textbf{0.9301} &\textbf{0.0553 }\\
		\bottomrule %最后一行粗线
	\end{tabular}}
\end{table}
\begin{table}[!htbp]
	\centering\small
	\caption{IT and CPU of the PSGD, PSKM and APSKM methods for Problem 2 with $C=\{x:\langle A, x\rangle = b\}$, $m=10000$ and $\varepsilon=10^{-4}$}
	\label{table4}
	\scalebox{0.83}{
	\begin{tabular}{cccccccc}
		\toprule  % 第一行粗线
		\multicolumn{3}{c}{ $\xi$ }  &0.1&0.3&0.5&0.7&0.9\\
		\hline
		\multirow{6}*{$k_c=300$}&\multirow{2}*{PSGD}  & IT  &40928&35472&38647&55725&18762\\
		& & CPU       &2.5224 &2.1545 &2.4500 &3.3833 &1.1458 \\
		\cline{2-8}
		&\multirow{2}*{PSKM}  & IT  &10988&10661&9274&6993&2099\\
		& & CPU       &0.8504 &0.7951 &0.6914 &0.5262 &0.1611 \\
		\cline{2-8}
		&\multirow{2}*{APSKM}  & IT         &\textbf{3692}&\textbf{1807}&\textbf{1639}&\textbf{859}&\textbf{180}\\
		& &CPU        &\textbf{0.6274} &\textbf{0.3066 }&\textbf{0.2738} &\textbf{0.1465} &\textbf{0.0350} \\
		\midrule
		\multirow{6}*{$k_c=500$}&\multirow{2}*{PSGD}  & IT  &42065&42246&56107&75285&20291\\
		& & CPU       &4.0855 &4.1093 &5.4494 &7.2859 &1.9886 \\
		\cline{2-8}
		&\multirow{2}*{PSKM}  & IT  &11415&10690&9355&7103&2038\\
		& & CPU       &1.2713 &1.2008 &1.0503 &0.8031 &0.2384 \\
		\cline{2-8}
		&\multirow{2}*{APSKM}  & IT         &\textbf{5070}&\textbf{3611}&\textbf{2433}&\textbf{1248}&\textbf{118}\\
		& &CPU        &\textbf{1.0334} &\textbf{0.7475} &\textbf{0.5114} &\textbf{0.2642}&\textbf{0.0411} \\
		\midrule
		\multirow{6}*{$k_c=1000$}&\multirow{2}*{PSGD}  & IT  &61130&72535&88528&108304&21198\\
		& & CPU       &4.6189 &5.4521 &6.6552 &8.1282 &1.6060 \\
		\cline{2-8}
		&\multirow{2}*{PSKM}  & IT  &11549&11170&9778&7273&2081\\
		& & CPU       &\textbf{1.0210} &\textbf{1.0029} &0.8732 &0.6582 &0.2052 \\
		\cline{2-8}
		&\multirow{2}*{APSKM}  & IT         &\textbf{6819}&\textbf{6746}&\textbf{3615}&\textbf{1924}&\textbf{151}\\
		& &CPU        &1.3773 &1.3554 &\textbf{0.7416} &\textbf{0.4027} &\textbf{0.0588} \\
		\bottomrule %最后一行粗线
	\end{tabular}}
\end{table}
\section{Conclusions}
We have proposed the NSKM method for solving large-scale nonlinear equations. At each step, only a part of residuals are computed. Furthermore, we have developed two variants of the NSKM method for solving large-scale nonlinear equations with finite convex constraints. Convergence analysis of the proposed methods is given.  

From the numerical point of view, some numerical results show that the NSKM method with the sample of the right size is more effective than the NRK method in terms of CPU times. Moreover, two variants of the NSKM method can converge well to the solution of the constrained nonlinear problems and the APSKM method is superior to the PSKM method for nonlinear problems with large near-linear correlation equality constraints.

\bmhead{Acknowledgments}
This work was supported by the Fundamental Research Funds for the Central Universities(20CX05011A) and the Fundamental Research Funds for the Central Universities (grant number 18CX02041A).

\appendix
\section*{Appendix}
\section{Proof of Lemma \ref{lemm2.3.3} }
\begin{proof}
	Since $f: \mathscr{D}(f)\to \mathbb{R}$ is a convex function, we have that
	\begin{equation}\label{applemma2.3.1}
		f((1-\alpha)x+\alpha y)\leq (1-\alpha)f(x)+\alpha f(y), \forall \alpha \in [0,1], \forall x,y \in \mathscr{D}(f).
	\end{equation}
	By Taylor formula, it holds 
	\begin{equation}\label{applemma2.3.2}
		f((1-\alpha)x+\alpha y)=f(x)+\alpha f'(x)(y-x)+o(\|\alpha(y-x)\|).
	\end{equation}
	Combining \eqref{applemma2.3.1} and \eqref{applemma2.3.2}, we obtain that 
	$$f(y)-f(x)\geq  f'(x)(y-x)+\frac{o(\|\alpha(y-x)\|)}{\alpha}.$$
	Let $\alpha\to 0$, 
	$$f(y)\geq f(x)+ f'(x)(y-x).$$
	This completes the proof.
\end{proof}
\section{Proof of Lemma \ref{lem3.2}}
\begin{proof}
	We first show that $\lambda_k\geq 1$. Observe that 
	\begin{align*}
		&\quad2\langle x_{k-\frac{3}{5}}-x_{k-\frac{1}{5}},x_{k-\frac{3}{5}}-x_{k-\frac{2}{5}}\rangle\\
		&=\|x_{k-\frac{3}{5}}-x_{k-\frac{1}{5}}\|^2+\|x_{k-\frac{3}{5}}-x_{k-\frac{2}{5}}\|^2-\|x_{k-\frac{1}{5}}-x_{k-\frac{2}{5}}\|^2\\
		&\leq 2(\|x_{k-\frac{2}{5}}-x_{k-\frac{3}{5}}\|^2-\|x_{k-\frac{1}{5}}-x_{k-\frac{2}{5}}\|^2),
	\end{align*}
	where the last inequality follows from Lemma \ref{lem3.1}.
	
	Hence $\langle x_{k-\frac{3}{5}}-x_{k-\frac{1}{5}},x_{k-\frac{3}{5}}-x_{k-\frac{2}{5}}\rangle
	\leq \|x_{k-\frac{2}{5}}-x_{k-\frac{3}{5}}\|^2$, which implies that 
	\begin{equation}\label{equa12}
		\lambda_k=\frac{\|x_{k-\frac{3}{5}}-x_{k-\frac{2}{5}}\|^2}{\left<x_{k-\frac{3}{5}}-x_{k-\frac{1}{5}}, x_{k-\frac{3}{5}}-x_{k-\frac{2}{5}}\right>}\geq 1.
	\end{equation}
	
	Next, we will prove that $x_{k}-x_{k-\frac{2}{5}}$ and $x_{k-\frac{3}{5}}-x_{k-\frac{2}{5}}$ are orthogonal.
	\begin{align}\label{equa13}
		\langle x_{k}-x_{k-\frac{2}{5}}, x_{k-\frac{3}{5}}-x_{k-\frac{2}{5}} \rangle &=\langle (x_{k-\frac{3}{5}}-x_{k-\frac{2}{5}})+\lambda_k (x_{k-\frac{1}{5}}-x_{k-\frac{3}{5}}), x_{k-\frac{3}{5}}-x_{k-\frac{2}{5}} \rangle \nonumber \\
		&=\|x_{k-\frac{3}{5}}-x_{k-\frac{2}{5}}\|^2+\lambda_k \langle x_{k-\frac{1}{5}}-x_{k-\frac{3}{5}}, x_{k-\frac{3}{5}}-x_{k-\frac{2}{5}} \rangle \nonumber \\
		&=\|x_{k-\frac{3}{5}}-x_{k-\frac{2}{5}}\|^2(1+\frac{\langle x_{k-\frac{1}{5}}-x_{k-\frac{3}{5}}, x_{k-\frac{3}{5}}-x_{k-\frac{2}{5}} \rangle}{\left<x_{k-\frac{3}{5}}-x_{k-\frac{1}{5}}, x_{k-\frac{3}{5}}-x_{k-\frac{2}{5}}\right>}) \nonumber \\
		&=0.
	\end{align}
	
	Finally, we utilize \eqref{equa12} and \eqref{equa13} to prove \eqref{lemma3.2.1}.
	
	For every $x\in C_{\alpha_1^k} \cap C_{\alpha_2^k}$, we have 
	$$\|x_k-x\|^2=\|x_k-x_{k-\frac{1}{5}}\|^2+\|x_{k-\frac{1}{5}}-x\|^2+2\langle x_k-x_{k-\frac{1}{5}}, x_{k-\frac{1}{5}}-x\rangle.$$ 
	
	By writing $\langle x_k-x_{k-\frac{1}{5}}, x_{k-\frac{1}{5}}-x \rangle=\langle x_k-x_{k-\frac{1}{5}}, x_{k-\frac{1}{5}}-x_k \rangle+\langle x_k-x_{k-\frac{1}{5}}, x_{k}-x \rangle$, we find that
	$$\|x_k-x\|^2=\|x_{k-\frac{1}{5}}-x\|^2-\|x_{k}-x_{k-\frac{1}{5}}\|^2+2\langle x_k-x_{k-\frac{1}{5}}, x_{k}-x\rangle.$$
	
	By the definition of $x_{k}$, we obtain that 
	\begin{align*}
		\langle x_k-x_{k-\frac{1}{5}}, x_{k}-x\rangle&=(1-\lambda_k)\langle x_{k-\frac{3}{5}}-x_{k-\frac{1}{5}}, x_{k}-x\rangle\\
		&=(1-\lambda_k)(\langle x_{k-\frac{3}{5}}-x_{k-\frac{2}{5}}, x_{k}-x\rangle+\langle x_{k-\frac{2}{5}}-x_{k-\frac{1}{5}}, x_{k}-x\rangle).
	\end{align*}

	For the first inner product of the above formula, we have
	$$\langle x_{k-\frac{3}{5}}-x_{k-\frac{2}{5}}, x_{k}-x\rangle=\langle x_{k-\frac{3}{5}}-x_{k-\frac{2}{5}}, x_{k}-x_{k-\frac{2}{5}}\rangle+\langle x_{k-\frac{3}{5}}-x_{k-\frac{2}{5}}, x_{k-\frac{2}{5}}-x\rangle \geq 0,$$
	where the inequality comes from Lemma \ref{lem3.1} and \eqref{equa13}.
	
	For the second inner product, we can obtain
	\begin{align*}
		\langle x_{k-\frac{2}{5}}-x_{k-\frac{1}{5}}, x_{k}-x\rangle&=\langle x_{k-\frac{2}{5}}-x_{k-\frac{1}{5}}, x_{k}-x_{k-\frac{1}{5}}\rangle+\langle x_{k-\frac{2}{5}}-x_{k-\frac{1}{5}}, x_{k-\frac{1}{5}}-x\rangle\\
		&\geq \langle x_{k-\frac{2}{5}}-x_{k-\frac{1}{5}}, x_{k}-x_{k-\frac{1}{5}}\rangle\\
		&=(1-\lambda_k)\langle x_{k-\frac{2}{5}}-x_{k-\frac{1}{5}}, x_{k-\frac{3}{5}}-x_{k-\frac{1}{5}}\rangle\\
		&\geq0,
	\end{align*}
	where the first inequality follows Lemma \ref{lem3.1}, the second equality comes from the definition of $x_k$ and the second ineuality is from Lemma \ref{lem3.1} and \eqref{equa12}.
	
	Thus, 
	\begin{equation}\label{lemm3.2.1}
		\|x_k-x\|^2\leq \|x_{k-\frac{1}{5}}-x\|^2-\|x_k-x_{k-\frac{1}{5}}\|^2 \leq \|x_{k-\frac{1}{5}}-x\|^2.
	\end{equation}

	Since the iteration points $x_i$ $(i=k-\frac{3}{5},k-\frac{2}{5},k-\frac{1}{5})$ are obtained by projecting on the closed convex sets, by Lemma \ref{lem3.1}, it results in
	$$\|x_i-x\|^2\leq \|x_{i-\frac{1}{5}}-x\|^2-\|x_i-x_{i-\frac{1}{5}}\|^2.$$
	
	Thus,
	\begin{equation}\label{lemma3.2.2}
		\|x_{k-\frac{1}{5}}-x\|^2\leq\|x_{k-\frac{2}{5}}-x\|^2\leq\|x_{k-\frac{3}{5}}-x\|^2\leq \|x_{k-\frac{4}{5}}-x\|^2.
	\end{equation} 

	From \eqref{lemm3.2.1} and \eqref{lemma3.2.2}, we get that
	$$\|x_k-x\|^2\leq\|x_{k-\frac{4}{5}}-x\|^2.$$
\end{proof}

%%=============================================%%
%% For submissions to Nature Portfolio Journals %%
%% please use the heading ``Extended Data''.   %%
%%=============================================%%

%%=============================================================%%
%% Sample for another appendix section			       %%
%%=============================================================%%

%% \section{Example of another appendix section}\label{secA2}%
%% Appendices may be used for helpful, supporting or essential material that would otherwise 
%% clutter, break up or be distracting to the text. Appendices can consist of sections, figures, 
%% tables and equations etc.

%%===========================================================================================%%
%% If you are submitting to one of the Nature Portfolio journals, using the eJP submission   %%
%% system, please include the references within the manuscript file itself. You may do this  %%
%% by copying the reference list from your .bbl file, paste it into the main manuscript .tex %%
%% file, and delete the associated \verb+\bibliography+ commands.                            %%
%%===========================================================================================%%
\bibliographystyle{plain}
\bibliography{reference.bib}
%\bibliography{reference.bib}% common bib file
%% if required, the content of .bbl file can be included here once bbl is generated
%%\input sn-article.bbl

%% Default %%
%%\input sn-sample-bib.tex%

\end{document}